\newcommand{\even}{\mathrm{even}}
\newcommand{\odd}{\mathrm{odd}}
\newcommand{\Spec}{\mathrm{Spec}}
\newcommand{\Sym}{\mathrm{Sym}}
\newcommand{\corank}{\mathrm{corank}}
\newcommand{\rank}{\mathrm{rank}}
\newcommand{\Zz}{\mathbb{Z}}
\newcommand{\Cc}{\mathbb{C}}
\newcommand{\Pp}{\mathbb{P}}
\newcommand{\Qq}{\mathbb{Q}}
\newcommand{\Bb}{\mathcal{B}}
\newcommand{\Xx}{\mathcal{X}}
\newcommand{\Yy}{\mathcal{Y}}
\newtheorem{theorem}{Theorem}[section]
\newtheorem{lemma}[theorem]{Lemma}
\newtheorem{proposition}[theorem]{Proposition}
\newtheorem{definition}[theorem]{Definition}
\newtheorem{example}[theorem]{Example}
\newtheorem{corollary}[theorem]{Corollary}
\newtheorem{remark}[theorem]{Remark}
\numberwithin{equation}{section}
\begin{document}
\title[On stringy Euler characteristics of Clifford nc varieties]{On stringy Euler characteristics of Clifford non-commutative varieties}

\begin{abstract}
It was shown by Kuznetsov that complete intersections of $n$ generic quadrics in $\Pp^{2n-1}$ are related by Homological Projective Duality to certain non-commutative (Clifford) varieties which are in some sense birational to double covers of $\Pp^{n-1}$ ramified over symmetric determinantal hypersurfaces. Mirror symmetry predicts that the Hodge numbers of the complete intersections of quadrics must coincide with the appropriately defined Hodge numbers of these double covers. We observe that these numbers must be different from the well-known Batyrev's stringy Hodge numbers, else the equality fails already at the level of Euler characteristics. We define a natural modification of stringy Hodge numbers for the particular class of Clifford varieties, and prove the  corresponding equality of Euler characteristics in arbitrary dimension.
\end{abstract}

\author{Lev Borisov}
\address{Department of Mathematics\\
Rutgers University\\
Piscataway, NJ 08854} \email{borisov@math.rutgers.edu}

\author{Chengxi Wang}
\address{Department of Mathematics\\
Rutgers University\\
Piscataway, NJ 08854} \email{cw674@math.rutgers.edu}

\maketitle

\tableofcontents

\section{Introduction.}\label{section1}

Mirror symmetry is a relationship of duality between families of Calabi-Yau manifolds which has origins in superstring theory. There are two simplifications of the so-called nonlinear sigma models of superstring theory, the $A-$ and $B-$twists that depend only on the symplectic structure and the complex structure respectively, see for example \cite{Dcox}. A mirror symmetric pair is two Calabi-Yau manifolds $X$ and $Y$ which, together with some K\"ahler data,  give rise to two superconformal field theories that differ by a twist, see \cite{Dcox}.  

\medskip
The physical statement of being a mirror pair has multiple verifiable mathematical consequences. In particular, the $(p,q)-$ stringy Hodge numbers of $X$ and the $(\dim Y-p,q)-$stringy Hodge numbers of $Y$ are predicted to be equal. Also,  the Homological mirror symmetry conjecture proposed by Kontsevich in \cite{Mkon} predicts that the bounded derived category of coherent sheaves on $X$ and the derived Fukaya category of $Y$ with the appropriate symplectic structure are equivalent.

\medskip
There are several interesting examples when a single family of Calabi-Yau varieties $\{Y\}$ has two different mirror families $\{X_1\}$ and $\{X_2\}$. In this situation we refer to $\{X_1\}$ and $\{X_2\}$ as being double mirrors of each other, in the sense of them being mirror of a mirror. Such double mirror phenomenon predicts several mathematical statements that one may attempt to check rigorously in individual examples. Specifically, the complex moduli of $\{X_1\}$ and $\{X_2\}$ can be identified and the derived categories of the coherent sheaves for the corresponding elements of two families are expected to be equivalent. Some of these double mirrors are simply birational Calabi-Yau varieties, as is the case in the toric setting of \cite{Batyrev.2, LBo, ZhanLi, Clarke, FaveroKelly}. More interesting examples appear in the setting of nonabelian gauged linear sigma models and/or Homological Projective Duality of \cite{HPD}. For instance, the Pfaffian-Grassmannian correspondence of \cite{Rodland} has been verified in \cite{BorCald, Kuz1} to lead to derived equivalent non-birational Calabi-Yau varieties. It has also been used to construct zero divisors in the Grothendieck ring of algebraic varieties, see \cite{BorK0}.

\medskip
One of the early examples of Homological Projective Duality is that of complete intersections of quadrics, and it is the main focus of our paper.
Specifically, let $V$ be a complex  vector space of dimension $2n$.  In \cite{ku1,ku2}, Kuznetsov considered a Lefschetz decomposition of the derived category of $\mathbb{P}(V)$ with respect to the double Veronese embedding $\mathbb{P}(V)\rightarrow \mathbb{P}(\Sym^2V)$. He verified that the non-commutative algebraic variety $(\mathbb{P}(\Sym^2V^*),\mathcal{B}_0)$, where $\mathcal{B}_0$ is the universal sheaf of even parts of Clifford algebras over $\mathbb{P}(\Sym^2V^*)$, is Homologically Projectively Dual to $\mathbb{P}(V)$ with respect to this Lefschetz decomposition.  As a consequence, for a generic vector subspace $W\subset \Sym^2V^*$ of dimension $n$ there is  an equivalence 
\begin{equation}\label{kuzder}
\mathcal{D}^b(Y_W)\cong \mathcal{D}^b(\mathbb{P}W,\mathcal{B}_0)
\end{equation}
where $Y_W\subset \Pp V$ is the complete intersection of quadrics defined by $W$ and $\mathcal{D}^b(\mathbb{P}W,\mathcal{B}_0)$ is the derived category of sheaves of $\mathcal{B}_0-$modules on $\mathbb{P}W$. We call the sheaves of algebras $\mathcal{B}_0$ on $\mathbb{P}W$ \emph{Clifford non-commutative} varieties.

\medskip
We interpret the result \eqref{kuzder} of \cite{ku1} as the statement that $(\mathbb{P}W,\mathcal{B}_0)$ and $Y_W$ are double mirror to each other.
In particular, it is natural to try to consider some kind of Euler characteristics of the non-commutative Clifford variety $(\mathbb{P}W,\mathcal{B}_0)$ and to  predict it to be the same as the Euler characteristics of the complete intersection $Y_W$. 

\medskip 
To a Clifford non-commutative variety $(\mathbb{P}W,\mathcal{B}_0)$ we can associate the relative $\Spec$ of the center of $\mathcal{B}_0$ which gives a double cover $Z=Z_W\to\Pp W$. The sheaf of algebras $\mathcal{B}_0$ is almost a sheaf of Azumaya algebras over $Z$ in the sense that it is so over a Zariski open subset. Note that $Z$ is singular for $n>3$ and we can view $(\mathbb{P}W,\mathcal{B}_0)$ as a non-commutative crepant resolution of singularities of $Z$ (equipped with a Brauer class to account for the Azumaya algebras). 

\medskip
The first reasonable approach to defining Hodge numbers of  $(\mathbb{P}W,\mathcal{B}_0)$  is to use the Batyrev's stringy Hodge numbers of $Z$, see \cite{Batyrev.1}. These are defined for varieties with log-terminal singularities, and $Z$ has, in fact, Gorenstein canonical singularities.
Unfortunately, this approach fails already at the level of Euler characteristics, in the sense that 
$$
\chi_{st}(Z) \neq \chi(Y_W)
$$
for large enough $n$. Moreover, Batyrev's stringy Euler characteristics of $\chi_{st}(Z)$ can be non-integer. Remarkably, a simple change of discrepancies (Definition \ref{aij}) allows us to define a Batyrev-style invariant (Definition \ref{defcli-stri}) which we call Clifford-stringy Euler characteristics that rectifies this.  Namely, we are able to verify our prediction and get equality between the Clifford-stringy Euler characteristics $\chi_{cst}(Z)$ and the Euler characteristics of the complete intersection, which is our main result Theorem \ref{main}. 

\bigskip

{\bf Theorem \ref{main}.}
Let $Y=\{v\in V| q(v)=0$ for all $q\in \mathbb{P}W\}$ be the complete intersection associated to $W$ in $\mathbb{P}V $. Then we have $$\chi_{cst}(Z)=\chi(Y).$$

\medskip

The paper is organized as follows.
In Section \ref{section2}, we first recall the definition of complete quadrics and their moduli space  $\widehat{\mathbb{P}W}$, which are important technical tools for constructing resolutions of singularities of $Z$. This allows us to construct a partial resolution of singularities $\theta:\widehat{Z}\rightarrow Z$ where $\widehat{Z}$ has explicitly written toroidal singularities. We are able to resolve these singularities of $\widehat{Z}$ in a systematic way by using the combinatorial construction of  Appendix \ref{res} to get the toroidal resolution $\mu:\widetilde{Z}\rightarrow\widehat{Z}$.
Section \ref{cli-str} contains the calculation for the discrepancies of exceptional divisors of the resolution $\theta\circ\mu:\widetilde{Z} \rightarrow Z$. Then we define the modified discrepancies of these exceptional divisors and the Clifford-stringy Euler characteristics of $Z$.
Section \ref{fiber} computes Euler characteristics of strata of fibers of $\theta$.
Section \ref{contribu} begins to calculate the Clifford-stringy Euler characteristics of $Z$ by analyzing the local contributions of singular points. We deduce a combinatorial expression of the formula of the local contribution in Proposition \ref{chit} and calculate several examples to find an interesting $1,2,1,2, \ldots$ pattern.
Section \ref{sec.hard}, which is the most cumbersome part of the paper, gives the proof for this $1,2,1,2, \ldots$ pattern.
In Section \ref{same}, we prove our main Theorem \ref{main} which reveals the relationship between the Clifford-stringy Euler characteristics and the Euler characteristics of the complete intersection of quadrics.
Section \ref{open} contains several comments and open questions.
In Appendix \ref{res}, we describe the resolution needed in Section \ref{section2}. We also verify a certain convexity property for the triangulation.

\medskip
\noindent{\it Acknowledgements.} L.B. has been partially supported by NSF grant DMS-1601907. 

\section{Complete quadrics, the double cover and resolution of singularities.}\label{section2}
In this section, we first recall the definition of the complete quadrics. Then we construct two double covers $\sigma: Z\rightarrow \mathbb{P}W$, $\rho: \widehat{Z}\rightarrow \widehat{\mathbb{P}W}$ and a map $\theta:\widehat{Z} \rightarrow Z$. We show that the singular points on $\widehat{Z}$ are all toroidal. Then we resolve the singularities of $\widehat{Z}$ by toric methods of  Appendix \ref{res}.

\subsection{Complete quadrics.}\label{complete}
Let $V$ be a vector space of dimension $n$. Let $\Phi= \mathbb{P}\Sym^2 V^*$ be the space of quadratic forms on V up to scaling. For arbitrary $\mathbb{C}q\in \mathbb{P}\Sym^2 V^*$,
if $$q=f(x_1,\ldots, x_n)=\Sigma_{1\leq i \leq j \leq n}a_{ij}x_ix_j $$
in a basis $\{x_i | 1\leq i \leq n\}$ of $V^*$, then the corresponding symmetric matrix is

$$
   M(q)=
   \begin{pmatrix}
   a_{11} & \frac{1}{2}a_{12} & \frac{1}{2}a_{13} & \ldots \\
   \frac{1}{2}a_{12} & a_{22} & \frac{1}{2}a_{23} & \ldots \\
   \frac{1}{2}a_{13} & \frac{1}{2}a_{23} & a_{33} & \ldots \\
   \vdots & \vdots & \vdots & \ddots
  \end{pmatrix}
$$
and $Rad(q)=\{v\in V | M(q)v=0 \}$ is a subspace of $V$, which is called the radical of $q$. Quadratic form $q$ is called non-degenerate if $Rad(q)=0$. When $Rad(q)\neq 0$, $q$ is degenerate as a
quadratic form on $V$. However, $q$ can be regarded as a non-degenerate quadratic form on $V/Rad(q)$. Let the quadric corresponding to $q$ be $Q$ which is zero set of $q=f(x_1,\ldots x_n)$ in $\mathbb{P}V$. We define the rank of $q$ or $Q$ to be the rank of $M(q)$, and the corank as the dimension of $Rad(q)$. The singular locus of $Q$ is $\mathbb{P}Rad(q)$.

\medskip

\begin{definition}\cite{Thaddeus}
A complete quadric in $\mathbb{P}W$ is a finite sequence of quadrics $Q_i$, where $Q_1$ is a hypersurface in $\mathbb{P}V$, $Q_{i+1}$ is a hypersurface in the singular locus of $Q_i$, and the last $Q_i$ is smooth.
\end{definition}
Let $q_i$ be the symmetric form corresponding to $Q_i$, $V_0=V$ and $V_i=Rad(q_i)$ when $i>1$. Then $q_i$ can be regarded as a non-degenerate quadratic form on $V_{i-1}/V_i$. Thus, we have the following equivalent definition of a complete quadric.

\begin{definition}\label{decrease}
A complete quadric is a flag $0= V_l \subset V_{l-1} \subset \ldots \subset V_1 \subset V_0=V$ together with non-degenerate quadratic forms $\mathbb{C}q_i \in \mathbb{P}\Sym^2(V_{i-1}/V_i)^*$.
\end{definition}
Let $\Phi$ be the space of quadrics in $\mathbb{P}V$, which equals the projective space $\mathbb{P}\Sym^2 V^*$. Let $\Phi_1=\{Q \in \Phi | \corank\, Q \geq 1\}$ be the subvariety of $\Phi$ consisting of all the quadrics with corank $\geq 1$. In other words, the singular quadrics form a hypersurface $\Phi_1$ of degree $n$, given by the vanishing of the determinant of the matrix $M(q)$ above. The hypersurface $\Phi_1 \subseteq \mathbb{P}\Sym^2 V^*$ is not smooth. Its singular locus $\Phi_2$ equals $\{Q \in \Phi | \corank\, Q \geq 2\}$, which has codimension $3$ in $\Phi$. In general, if $\Phi_k=\{Q \in \Phi | \corank\, Q \geq k\}$, $\Phi_k$ has codimension
$
   \begin{pmatrix}
k+1 \\
2
\end{pmatrix}
$
in $\Phi$. When $k < n-1$, $\Phi_k$ is not smooth and its singular locus is $\Phi_{k+1}$. The locus $\Phi_{n-1}=\{Q \in \Phi | \corank\, Q \geq n-1\}=\{Q \in \Phi | \rank\, Q = 1\}$ is $\mathbb{P}V^*$ in its veronese embedding in $\mathbb{P}\Sym^2 V^*$. These loci $\Phi_i$ form a stratification of $\Phi$. The space of complete quadrics provides a log resolution of this stratification. It is described in the following proposition.

\begin{proposition}
Consider the successive blowups along $\Phi_{n-1}$, the loci of $quadrics$ of corank $\geq n-1$ in $\Phi$, then the proper preimage of $\Phi_{n-2}$, the loci of quadrics of corank $\geq n-2$ in $\Phi$, etc. At each stage the center of the blowup is smooth, so all of the blowups are smooth. The resulting space is the space of complete quadrics, which  is a smooth variety $\widehat{\Phi}$ which parametrizes the flags $$0= F^0 \subset F^1 \subset \ldots \subset F^{l-1} \subset F^l=V$$ together with non-degenerate quadrics $\mathbb{C}q_i \in \mathbb{P}\Sym^2(F^{i+1}/F^i)^*$. The map $ \widehat{\Phi} \rightarrow \Phi$ which we denote by $\pi$ is given by interpreting a quadratic from on $F^l/F^{l-1}$ as a quadratic form on $F^l=V$.
\end{proposition}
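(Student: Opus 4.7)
The plan is to run induction on the stages of the iterated blowup, verifying at each step that (i) the next center is smooth, (ii) the total space remains smooth, and (iii) the new exceptional divisor encodes precisely one more piece of the flag-plus-quadric data. Once all $n-1$ blowups are performed, one reads off the moduli description from the collection of exceptional divisors plus the proper transform of the top-corank stratum.

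The first technical step is to pin down the local structure of the determinantal stratification. Near a quadric $q$ with $\corank q = k$ exactly, pick a decomposition $V = U \oplus Rad(q)$. A symmetric form close to $q$ has block matrix $\bigl(\begin{smallmatrix} A & B\\ B^T & C\end{smallmatrix}\bigr)$ with $A$ invertible in a neighborhood, and Gauss elimination — which is an étale change of coordinates on $\Phi$ near $q$ — brings it to the block-diagonal form $\bigl(\begin{smallmatrix} A & 0\\ 0 & D\end{smallmatrix}\bigr)$ with $D = C - B^T A^{-1} B$. Since $A$ is invertible, the condition $\corank \ge k+j$ reduces to $\rank D \le k-j$. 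Thus locally around $q$, the stratification $\Phi_k \supset \Phi_{k+1} \supset \cdots$ is pulled back, up to a smooth factor in the variable $A$, from the universal corank stratification of $k \times k$ symmetric matrices at the origin. In particular $\Phi_k \setminus \Phi_{k+1}$ is smooth of the asserted codimension $\binom{k+1}{2}$, and the singular locus of $\Phi_k$ is exactly $\Phi_{k+1}$.

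Next I would execute the blowups. The first center $\Phi_{n-1}$ is the Veronese $\Pp V^*$, which is smooth. In the local étale model above with $k = n-1$, blowing up the origin in $\Sym^2(U \oplus Rad(q))^*$ along the cone of rank-one forms separates each such form from its direction of second-order deformation; the exceptional fiber parametrizes pairs (rank-one form, hyperplane $F^{l-1}\subset V$ containing its vanishing line, non-degenerate class on $V/F^{l-1}$), and a direct local computation shows that the proper transform of $\Phi_{n-2}$ becomes smooth along this exceptional divisor. Iterating, after the first $j$ blowups the étale local picture around a point lying over a quadric of $\corank = n-j-1$ is again of the Gauss-eliminated form, with the already-introduced exceptional divisors capturing the top portion $F^{l-j+1} \subset \cdots \subset F^l = V$ of the flag together with the non-degenerate quadrics on the corresponding quotients; the next center to blow up reduces to the proper transform of the Veronese in a smaller symmetric-matrix space and is again smooth. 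Passing to the next stage introduces one more subspace $F^{l-j} \subset F^{l-j+1}$ and one more non-degenerate quadric on the corresponding quotient.

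After all $n-1$ blowups the accumulated data at each point is a flag $0 = F^0 \subset F^1 \subset \cdots \subset F^l = V$ together with non-degenerate quadratic forms on every successive quotient, which is precisely Definition \ref{decrease}. The map $\pi$ to $\Phi$ is the tautological forgetful morphism retaining only the quadric on $F^l/F^{l-1}$, viewed as a (degenerate) quadratic form on $V$. The main obstacle is the clean patching in the inductive step: one must verify that the Gauss-elimination charts at different points of a given corank stratum agree after blowup with those centered at deeper strata, and that the successive centers remain smooth globally rather than merely in each chart. Both points follow once one recognizes, as in De Concini--Procesi's work on wonderful compactifications, that every local chart is pulled back from the universal iterated blowup of the corank stratification in the space of symmetric matrices of the appropriate size; modulo this reduction, each individual verification is a short linear-algebra calculation in the normal form established in the first step.
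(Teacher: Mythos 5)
The paper does not actually prove this proposition: its ``proof'' is a citation to \cite{Bertram, Thaddeus}, where the construction of the space of complete quadrics by iterated blowups is carried out. Your proposal therefore takes a genuinely different route only in the sense that you attempt to reconstruct the argument behind those references rather than quote them, and the route you choose is indeed the standard one: the \'etale-local Gauss-elimination normal form $\bigl(\begin{smallmatrix} A & 0\\ 0 & D\end{smallmatrix}\bigr)$, which exhibits the stratification near a corank-$k$ point as a smooth factor times the universal corank stratification of $k\times k$ symmetric matrices (giving the codimension $\binom{k+1}{2}$ and $\mathrm{Sing}(\Phi_k)=\Phi_{k+1}$), followed by induction over the successive blowups in the style of De Concini--Procesi/Vainsencher/Thaddeus. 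This is exactly the content of the cited works, so your outline buys a self-contained explanation where the paper is content to outsource it.

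That said, as a proof your write-up still has the decisive verifications at the level of assertion, and these are precisely the nontrivial points. First, the claim that after blowing up $\Phi_{n-1}$ ``a direct local computation shows that the proper transform of $\Phi_{n-2}$ becomes smooth along this exceptional divisor,'' and its inductive analogue that each successive center is smooth, is the heart of the matter; it requires exhibiting explicit charts of the blown-up local model and checking that the strict transforms of the deeper corank loci are again, \'etale-locally, products of a smooth factor with a smaller symmetric-matrix corank stratification. Second, the identification of the resulting space with the moduli of flags plus non-degenerate quadrics on successive quotients (Definition \ref{decrease}) needs more than reading off ``accumulated data'': one must show the correspondence between points of the iterated blowup and complete quadrics is bijective, i.e.\ that the exceptional data over a degenerating family is exactly the induced non-degenerate form on the radical directions and nothing more. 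You flag the chart-patching issue honestly and defer it to the wonderful-compactification formalism, which is legitimate, but with those two computations left as claims the proposal is an accurate roadmap of the argument in \cite{Thaddeus} rather than a complete proof.
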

\begin{proof}
See \cite{Bertram, Thaddeus}.
\end{proof}

\medskip

\begin{remark}
Note that we now use increasing filtrations in contrast with Definition \ref{decrease}. We have $F^i=V_{l-i}$, $0\leq i \leq l$.
\end{remark}

\medskip

\begin{proposition}\label{2.5}
When $2\leq i \leq n-1$, let $D_i$ be the exceptional divisors of the map $\pi: \widehat{\Phi} \rightarrow \Phi$ corresponding to $\Phi_i$; let $D_1$ be the proper preimage of $\Phi_1$ under the map $\pi$. Then $D_i$ are described by requiring that a subspace of dimension $i$ is present in the flag $F^\bullet$, that is to say $D_i=\{flag \,F^\bullet \in \widehat{\Phi} | $ a subspace of dimension $i$ is present in the flag $F^\bullet\}$. These $D_i$, $1\leq i \leq n-1$ form a simple normal crossing divisor on $\widehat{\Phi}$. The generic point of $D_i$ maps by $\pi$ to the generic point of $\Phi_i$. The discrepancy of $D_i$ is codimension $(\Phi_i)-1=\frac{i(i+1)}{2}-1$, $2\leq i \leq n-1$.
\end{proposition}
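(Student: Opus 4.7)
My plan is induction on the blowup stages. Set $\Phi^{(0)}=\Phi$ and, for $k=1,\ldots,n-2$, let $\pi_k\colon\Phi^{(k)}\to\Phi^{(k-1)}$ be the blowup along the proper transform $\widetilde{\Phi}_{n-k}^{(k-1)}$, with exceptional divisor $E^{(k)}$; write $\widehat{\Phi}=\Phi^{(n-2)}$, so that $D_i$ is the proper transform of $E^{(n-i)}$ in $\widehat{\Phi}$ for $i\ge 2$. The inductive hypothesis at stage $k$ is: (i) $\widetilde{\Phi}_{n-k}^{(k-1)}$ is smooth of codimension $\binom{n-k+1}{2}$; (ii) for every $j<n-k$, $\widetilde{\Phi}_{j}^{(k)}$ is smooth of codimension $\binom{j+1}{2}$ and meets the existing $E^{(1)},\ldots,E^{(k)}$ transversally; (iii) the fiber of $E^{(k)}$ over a general point of its image parametrizes quadratic forms on the $(n-k)$-dimensional radical, so that points of $E^{(k)}$ encode the insertion of a subspace of dimension $n-k$ into the flag.

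The base case $k=1$ is immediate: $\Phi_{n-1}\cong\mathbb{P}V^*$ under the Veronese is smooth of codimension $\binom{n}{2}$, and at $q=\ell^2$ the tangent space to $\Phi_{n-1}$ inside $\Phi$ is $\ell\cdot V^*/\langle\ell^2\rangle$, giving projectivised normal space $\mathbb{P}\Sym^2 V_1^*$ with $V_1=\ker\ell$. For the inductive step, the essential input is a local normal form: at a general point of $\Phi_{j+1}\setminus\Phi_{j+2}$, one chooses coordinates adapted to the radical $U$ of dimension $j+1$ so that the nested family $\Phi_{j+1}\subset\Phi_{j}\subset\cdots\subset\Phi_1$ becomes locally the nested family of symmetric determinantal varieties $\{\rank\le r\}$ inside $\Sym^2 U^*$. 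The iterated blowup described in the proposition is then exactly the classical resolution of symmetric determinantal varieties (Bertram, Thaddeus); each stage leaves the next proper transform smooth and transverse to the existing exceptional divisors, and the new $E^{(k)}$ acquires its flag interpretation because a $\mathbb{P}\Sym^2$-direction over an existing partial flag is precisely the datum of an additional subspace together with a quadratic form on the quotient.

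The discrepancy of $D_i$ then follows from $K_{\Phi^{(k)}}=\pi_k^{*}K_{\Phi^{(k-1)}}+(c_k-1)E^{(k)}$ with $c_k=\binom{n-k+1}{2}$. Crucially, for $k'>k$ the subsequent center $\widetilde{\Phi}_{n-k'}^{(k'-1)}$ is not contained in $E^{(k)}$, because a generic point of $\Phi_{n-k'}\setminus\Phi_{n-k'+1}$ has corank $n-k'<n-k$ and therefore lies outside $\Phi_{n-k}$. Consequently the total transform of $E^{(k)}$ to $\widehat{\Phi}$ equals its proper transform $D_i$, and the discrepancy is $\binom{i+1}{2}-1$. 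The simple normal crossing statement is a byproduct of clause (ii). The main obstacle is precisely clause (ii): smoothness and transversality of the iterated proper transforms are not formal consequences of smoothness at stage zero, and one must exploit the explicit symmetric determinantal structure to see that the blowups untangle the singularities in the right order. Once the local model is secured, the remaining claims propagate without further substantive difficulty.
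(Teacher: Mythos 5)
Your proposal is correct and follows essentially the same route as the paper: the paper's proof likewise rests on the iterated blowup construction of the space of complete quadrics, cites Bertram/Thaddeus for the smoothness of the successive centers and the simple normal crossing property, and reads off each discrepancy as (codimension of the smooth blowup center) $-\,1$, which is exactly your $K_{\Phi^{(k)}}=\pi_k^{*}K_{\Phi^{(k-1)}}+(c_k-1)E^{(k)}$ computation together with the observation that later centers are not contained in earlier exceptional divisors. Your inductive bookkeeping (local symmetric determinantal model, transversality clause, flag interpretation of the exceptional fibers) just spells out in more detail what the paper delegates to those references.
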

\begin{proof}
From the iterated blowup construction we see that the exceptional divisor is a simple normal crossing divisor (see \cite{Bertram}). The description of it at the set-theoretic level is clear as well. The discrepancies are calculated based on the codimension of the (smooth) locus of the corresponding blowup.
\end{proof}

\subsection{Double cover.}\label{Doub}
Let $V$ be a vector space of dimension $2n$ and $\Sym^2 V^*$ be the space of symmetric bilinear forms on $V$ which we will also identify with the space of quadratic forms on $V$. We choose a dimension $n$ subspace $W\subset \Sym^2 V^*$ in general position and consider $\mathbb{P}W\subseteq \Phi=\mathbb{P}\Sym^2 V^*$. We denote by $\phi_i=\Phi_i \cap \mathbb{P}W$ the loci of forms in $\mathbb{P}W$ of corank at least $i$. In particular, $\phi_1$ is a degree $2n$ hypersurface in $\Pp W$. Indeed, if we pick bases $(q_1,\ldots,q_n)$ of $V$ and $(x_1,\ldots,x_{2n})$ of $W$, then the quadratic form 
$$
q=\sum_{i=k}^n u_k q_k\overline{x}\in W
$$
is degenerate if and only if the determinant of the $2n\times 2n$ matrix of linear forms on $\Pp W$ 
$$
\det\left(\frac{\partial ^2}{\partial x_i \partial x_j}\sum_1^nu_kq_k(\overline{x})\right)_{i,j}
$$
is zero.

\medskip
We now consider the double cover of $\Pp W$ ramified in $\phi_1$. It can be written in coordinates as a hypersurface in a weighted projective space
$$
y^2=\det\left(\frac{\partial ^2}{\partial x_i \partial x_j}\sum_1^nu_kq_k(\overline{x})\right)_{i,j}
$$
where $y$ has weight $n$ and $u_i$ have weight one. We denote this double cover by $Z$. Its significance stems from the work of Kuznetsov which identifies 
$Z$ with (the relative $\Spec$ of)  the center of a certain Clifford non-commutative variety.
We observe that the singular locus of $Z$ is the (preimage of) the singular locus of the ramification divisor, which is $\sigma^{-1}(\phi_2)$.

\medskip

We choose an integer $k$ such that $\frac{(k+1)(k+2)}{2} > n-1$ and $\frac{k(k+1)}{2} \leq n-1$. Since $\Phi_i$ has codimension $\frac{i(i+1)}{2}$ in $\Phi$
and the dimension $n$ subspace $W$ is generic, we have $\phi_{k+1}=\mathbb{P}W \cap \Phi_{k+1} = \emptyset$  and $\phi_i=\mathbb{P}W \cap \Phi_i \neq \emptyset$ when $i=1,2, \ldots , k$. Now we consider the restriction of map $\pi$ to $\pi^{-1}(\mathbb{P}W)$, which we also denote by $\pi: \widehat{\mathbb{P}W} \rightarrow \mathbb{P}W$, where $\widehat{\mathbb{P}W}=\pi^{-1}(\mathbb{P}W)$. Let $\phi_i=\Phi_i \cap \mathbb{P}W$ and $T_i=D_i \cap \widehat{\mathbb{P}W}$, $i=1,2, \ldots , k$. Since $\pi(D_i)=\Phi$, we have $\pi(E_i)=\phi_i$. Actually, $\pi: \widehat{\mathbb{P}W} \rightarrow \mathbb{P}W$ is a log-resolution of $\mathbb{P}W$, i.e. a proper birational morphism from a smooth variety $\widehat{\mathbb{P}W}$ such that the exceptional divisor $\bigcup_{i=1}^{k}T_i$ has simple normal crossings. Because $W$ is transversal to all $\Phi_i$, we have $$K_{\widehat{\mathbb{P}W}}=\pi^*K_{\mathbb{P}W}+\sum_{i=1}^{k}(\frac{i(i+1)}{2}-1)T_i.$$ By \cite{Bertram}, we have
\begin{equation}\label{iti}
\pi^*\phi_1=\sum_{i=1}^kiT_i
\end{equation}
which will be important for our calculations.

\medskip
Denote by $\widehat{Z}$ the normalization of the fiber product of $\pi: \widehat{\mathbb{P}W} \rightarrow \mathbb{P}W$ and $\sigma: Z \rightarrow \mathbb{P}W$. Then we have the following commutative digram.

\hskip 50pt \xymatrix{
\widehat{Z} \ar[d]_{\rho} \ar[r]^{\theta}
& Z \ar[d]_{\sigma}\\
\widehat{\mathbb PW} \ar[r]^{\pi}
& \mathbb PW
}

\noindent
We have $\rho: \widehat{Z} \rightarrow \widehat{\mathbb PW}$ is a double cover ramified over the $T_i$ for all odd $i$ in the range $1\leq i \leq k$. Indeed, the normalization only occurs over the $T_i$ which come with odd coefficients in $\pi^*\phi_1$ in \eqref{iti}.
 \footnote{This is analogous to the fact that normalization of $\Zz[\sqrt{m}]$ equals $\Zz[\sqrt{m'}]$, where $m'$ is the square free part of integer $m$.} Let $E_i$ be the proper transformation of $T_i$ under the map $\rho$ and $\phi_{iZ}$ be the preimage of $\phi_i$ under the map $\sigma$, where $1\leq i \leq k$. Then $\theta(E_i)=\phi_{iZ}$.

\medskip
\subsection{Resolution of singularities.}\label{resoZ}
Singularities of $\widehat{Z}$ are induced from singularities of the ramification divisor $\sum_{1\leq i \leq k,~i={\rm odd}}T_i$ which has simple normal crossings. 
Specifically, given a subset $S\subseteq \{1,3,5,\dots,2l-1\}$ , where $l=[\frac{k+1}{2}]$, we have for a point $p$ in $(\cap_{i\in S}T_i)\backslash\cup_{j\notin S}T_j$
an analytic neighborhood of $p$ where $T_i, i\in S$ are given by local coordinates $t_i$. Then a local analytic neighborhood of $p'=\rho^{-1}(p)$ is given by $y^2=\prod_{i\in S}t_{i}$ times a disc of dimension $(n-1)-|S|$. Thus singularities of $\widehat{Z}$ are toroidal and are amenable to usual toric resolution techniques, see \cite{Kempf}. We consider the corresponding toric singularities in Appendix \ref{res} where we provide an explicit resolution.

 \medskip
Specifically, in Appendix \ref{res}, we construct the resolution $\mu_{toric}: \widetilde{Z}_{toric} \rightarrow \widehat{Z}_{toric}$ which is a local description of the resolution of singularities of $\widehat{Z}$. In particular, we introduce additional exceptional divisors $\widetilde{E}_{2i-1,2j-1,toric}$  corresponding to the rays $e_{i,j}=\frac{e_i+e_j}{2}$ in $\Sigma$, where $i,j\in \{1,2,\dots,l\}$ and $i< j$. By \cite{Kempf}, we obtain the resolution of $\widehat{Z}$, $\mu: \widetilde{Z} \rightarrow \widehat{Z}$ and the corresponding exceptional divisors of the global resolution $\mu$ which we denote by $\widetilde{E}_{2i-1,2j-1}$, where $i,j\in \{1,2,\dots,l\}$ and $i< j$.
 Also, the proper transformations of the divisor $E_s$ under the resolution $\mu: \widetilde{Z}\to \widehat{Z}$ are denoted by $\widetilde{E}_s$ where $s=1,2,\dots,k$. Thus, the exceptional divisors of the resolution of singularities of $Z$, $\theta\circ\mu:\widetilde{Z}\to \widehat{Z}\to Z$ are $\widetilde{E}_s$, where $s=1, 2,\ldots,k$ and $\widetilde{E}_{s,t}$, where $s,t \in \{1,3,\ldots, 2l-1\}$ with $l=[\frac{k+1}{2}]$.

\section{Clifford-stringy Euler characteristics.}\label{cli-str}
This section contains the calculation for the discrepancies of exceptional divisors of the resolution $\theta\circ\mu:\widetilde{Z} \rightarrow Z$. Then we will define the modified discrepancies of those exceptional divisors and the Clifford-stringy Euler characteristics of $Z$.
\subsection{Discrepancies.}\label{Discre}

We have obtained the following commutative diagram in Section \ref{Doub}.
\hskip 100 pt
\xymatrix{
\widehat{Z} \ar[d]_{\rho} \ar[r]^{\theta}
& Z \ar[d]_{\sigma}\\
\widehat{\mathbb PW} \ar[r]^{\pi}
& \mathbb PW
}

We denote by  $T_i$  the proper preimages of $\phi_i\subset\Pp W$ under $\pi$ and by $\phi_{i,Z}\subseteq Z$  the  preimages of $\phi_i$ under $\sigma$. 
We denote by $E_i$ be the proper preimages of $T_i$ under $\rho$ for  $1\leq i \leq k$. Then we have the following lemma.

\begin{lemma}\label{discre}
There holds $\theta^*\phi_{1,Z}=\sum_{i=1}^{k}\beta_i E_i$ and $K_{\widehat{Z}}=\theta^*K_Z+\sum_{i=1}^{k}\gamma_iE_i$ with  
\begin{equation}\label{beta}
  \beta_i=\begin{cases}
    i, & \text{if $i\text{ is odd}$};\\
    \frac{i}{2}, & \text{if $i\text{ is even}$},
  \end{cases}
\end{equation}
\begin{equation}\label{gamma0}
  \gamma_i=\begin{cases}
    i^2-1, & \text{if $i\text{ is odd}$};\\
    \frac{i^2}{2}-1, & \text{if $i\text{ is even}$},
  \end{cases}
\end{equation}
in the sense of rational equivalence of $\Qq$-Carter divisors.
\end{lemma}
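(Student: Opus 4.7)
The plan is to exploit the commutativity of the square in Section \ref{Doub} and apply the ramification formulas for the two double covers $\sigma$ and $\rho$. Since $\sigma\colon Z\to\Pp W$ is branched along $\phi_1$ with $\phi_{1,Z}$ its reduced preimage, we have $\sigma^*\phi_1=2\phi_{1,Z}$ as Cartier divisors. Commutativity then gives $2\,\theta^*\phi_{1,Z}=\theta^*\sigma^*\phi_1=\rho^*\pi^*\phi_1=\sum_{i=1}^{k}i\,\rho^*T_i$, using equation \eqref{iti}. So the first identity reduces to determining $\rho^*T_i$. Since $\rho$ is a double cover branched precisely over $T_i$ for odd $i$, we have $\rho^*T_i=2E_i$ when $i$ is odd, while $\rho^*T_i=E_i$ when $i$ is even ($\rho$ is \'etale over the generic point of $T_i$ in that case, so the pullback carries multiplicity one). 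Dividing by two yields $\beta_i=i$ for odd $i$ and $\beta_i=i/2$ for even $i$, as claimed.

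For the discrepancies, first apply the ramification formula to $\sigma$: $K_Z=\sigma^*K_{\Pp W}+\phi_{1,Z}$, and pull back to obtain $\theta^*K_Z=\rho^*\pi^*K_{\Pp W}+\sum_{i=1}^{k}\beta_i E_i$. Next, combine $K_{\widehat{\Pp W}}=\pi^*K_{\Pp W}+\sum_i\alpha_iT_i$ with $\alpha_i=\frac{i(i+1)}{2}-1$ (Proposition \ref{2.5}) with the ramification formula $K_{\widehat{Z}}=\rho^*K_{\widehat{\Pp W}}+\sum_{i\text{ odd}}E_i$ for the double cover $\rho$, and substitute the values of $\rho^*T_i$ determined above to get $K_{\widehat{Z}} = \rho^*\pi^*K_{\Pp W} + \sum_{i\text{ odd}}(2\alpha_i+1)E_i + \sum_{i\text{ even}}\alpha_i E_i$. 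Subtracting $\theta^*K_Z$ then yields $\gamma_i=2\alpha_i+1-\beta_i=i^2-1$ for odd $i$ and $\gamma_i=\alpha_i-\beta_i=\frac{i^2}{2}-1$ for even $i$, matching the statement of the lemma.

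The one point to be careful about is the local structure of $\widehat{Z}$: it is the \emph{normalization} of the fiber product, not the fiber product itself. In the local coordinates of Section \ref{resoZ}, the fiber product has equation $y^2=\prod_i t_i^i$, and normalization replaces this by $y^2=\prod_{i\text{ odd}}t_i$, which is precisely what pins down the branch locus of $\rho$ as $\sum_{i\text{ odd}}T_i$ and justifies the pullback formulas $\rho^*T_i=2E_i$ (odd $i$) and $\rho^*T_i=E_i$ (even $i$). Once this local picture is invoked, the rest is a direct bookkeeping calculation using only equation \eqref{iti}, Proposition \ref{2.5}, and the standard double-cover Riemann--Hurwitz formula.
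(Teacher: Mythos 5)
Your proposal is correct and follows essentially the same route as the paper: pull back $\phi_1$ and $K_{\mathbb{P}W}$ through both sides of the commutative square, use $\pi^*\phi_1=\sum_i iT_i$, the discrepancies $\frac{i(i+1)}{2}-1$ from Proposition \ref{2.5}, and the Riemann--Hurwitz/ramification formulas for the two double covers together with $\rho^*T_i=2E_i$ (odd $i$), $E_i$ (even $i$). Your remark on the normalization $y^2=\prod_i t_i^i \rightsquigarrow y^2=\prod_{i\ \mathrm{odd}}t_i$ is the same justification the paper gives (in Section \ref{resoZ}) for the branch locus of $\rho$, so there is no substantive difference.
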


\begin{proof}
From the commutative diagram, we have $\theta^*\sigma^*\phi_1=\rho^*\pi^*\phi_1$ and we compare the two sides.
Since $\sigma$ is a double cover ramified at $\phi_{1,Z}$, we get $\sigma^*\phi_1=2\phi_{1,Z}$, so the left hand side equals
$2 \theta^* \phi_{1,Z}$. To calculate the right hand side, we use 
\begin{equation}\label{eqB}
\pi^*\phi_1=\sum_{i=1}^{k}iT_i
\end{equation}
from \cite{Bertram}. We also use
\begin{equation}\label{rami}
 \rho^*T_i=\begin{cases}
    2E_i, & \text{if $i\text{ is odd}$};\\
    E_i, & \text{if $i\text{ is even}$},
  \end{cases}
\end{equation}
due to the fact that $\rho$ is a double cover which is ramified over $\{T_i| $ i is  odd$\}$.
Thus
$$
2\theta^*\phi_{1,Z}=\rho^*\pi^*\phi_1 \stackrel{\eqref{eqB}}=\sum_{i=1}^k i \rho^* T_i
 \stackrel{\eqref{rami}}=\sum_{i\text{ is even,}1\le i\le k}iE_i+\sum_{i\text{ is odd,}1\le i\le k}2iE_i
$$
which implies \eqref{beta}.

\medskip

Similarly, the commutative diagram implies
\begin{equation}\label{commu}
\theta^*\sigma^*K_{\mathbb PW}=\rho^*\pi^*K_{\mathbb PW}.
\end{equation}
Since $\sigma$ is the double cover ramified at $\phi_1$, the left hand side of \eqref{commu} equals  
\begin{equation}\label{left}
\theta^*\sigma^*K_{\mathbb PW}=\theta^*(K_Z-\phi_{1,Z}) = \theta^*K_Z - \sum_{i=1}^k \beta_i E_i.
\end{equation}
We use the discrepancies formula of Proposition \ref{2.5} to see that the right hand side equals
\begin{align*}\label{right}
\rho^*\pi^*K_{\mathbb PW} &=\rho^*(K_{\widehat{\mathbb PW}} - \sum_{i=1}^{k}(\frac{i(i+1)}{2}-1)T_i) 
\\&=\rho^*K_{\widehat{\mathbb PW}} -\sum_{i\text{ is odd,}1\le i\le k}(i(i+1)-2)E_i-\sum_{i\text{ is even,}1\le i\le k}(\frac{i(i+1)}{2}-1)E_i
\\&= K_{\widehat{Z}} -\sum_{i\text{ is odd,}1\le i\le k}(i(i+1)-1)E_i-\sum_{i\text{ is even,}1\le i\le k}(\frac{i(i+1)}{2}-1)E_i
\end{align*}
where in the last equality we used that $\rho$ is ramified at $E_i$ for odd $i$. It remains to combine this with \eqref{commu} and  \eqref{left}
and use \eqref{beta} to get \eqref{gamma0}.
\end{proof}

Now we can calculate the discrepancies of the exceptional divisors of $\widetilde Z\to Z$. Recall that 
by construction of Section \ref{Doub} and \ref{resoZ}, we have the following commutative diagram:
$$\xymatrix{
  \widetilde{Z} \ar@/_/[ddr]_{\omega'} \ar@/^/[drr]^{\omega}
    \ar@{>}[dr]|-{\mu}                   \\
   & \widehat{Z} \ar[d]^{\rho} \ar[r]_{\theta}
                      & Z \ar[d]_{\sigma}    \\
   & \widehat{\mathbb{P}W} \ar[r]^{\pi}     & \mathbb{P}W     }$$

\begin{proposition}\label{discrep}
There holds
\begin{align*}
K_{\widetilde{Z}}=&\mu^*\theta^*K_Z+\sum_{s\text{ is even,}1\le s\le k}(\frac{s^2}{2}-1)\widetilde{E}_s + \sum_{s \in \{1,3,\ldots, 2l-1\}}(s^2-1)\widetilde{E}_s\\
&+\sum_{\{s,t\} \subseteq \{1,3,\ldots, 2l-1\}}(\frac{s^2+t^2}{2}-1)\widetilde{E}_{st}
\end{align*}
in the sense of equivalence of $\Qq$-Carter divisors,
where $K_{\widetilde{Z}}$ and $K_Z$ are canonical divisors of $\widetilde{Z}$ and $Z$ respectively.
\end{proposition}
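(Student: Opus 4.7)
The plan is to pull back the discrepancy formula $K_{\widehat Z} = \theta^* K_Z + \sum_{i=1}^k \gamma_i E_i$ of Lemma \ref{discre} along $\mu$ and read off the coefficients in terms of $\widetilde{E}_s$ and $\widetilde{E}_{s,t}$. This reduces the proposition to two local toric claims: (a) $\mu$ is crepant, i.e.\ $K_{\widetilde Z} = \mu^* K_{\widehat Z}$; and (b) an explicit expression for each $\mu^* E_s$ in terms of proper transforms and new exceptional divisors.

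To verify (a) and (b) I will work in an analytic neighborhood of a point in $(\bigcap_{s \in S} E_s) \setminus \bigcup_{s \notin S} E_s$ for an odd subset $S \subseteq \{1, 3, \ldots, 2l-1\}$ of size $m$. There $\widehat Z$ is identified with the toric variety of the cone $\sigma = \Rr_{\geq 0}^m$ in the lattice $N = \{n \in \frac{1}{2}\Zz^m : \sum_i n_i \in \Zz\}$, with dual lattice $M = \{a \in \Zz^m : \text{all } a_i \text{ have the same parity}\}$. The element $m_\sigma = (1, \ldots, 1)$ lies in $M$ and pairs to $1$ with each ray generator $e_i$, so $\widehat Z$ is locally Gorenstein with $K = -\sum_i D_{e_i}$. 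The new ray $e_{i,j} = (e_i + e_j)/2$ lies in $N$ and satisfies $\langle m_\sigma, e_{i,j}\rangle = 1$, so its toric discrepancy is $0$; this establishes (a). For (b), when $s$ is even no toric modification occurs over $E_s$ and $\mu^* E_s = \widetilde{E}_s$; when $s$ is odd, in the local model $E_s = D_{e_p}$ with $\operatorname{div}(t_s) = 2 D_{e_p}$, and since $\operatorname{ord}_{D_{e_{i,j}}}(t_s) = \langle 2 e_p^*, e_{i,j}\rangle = \delta_{pi} + \delta_{pj}$, I obtain $\mu^* E_s = \widetilde{E}_s + \frac{1}{2} \sum_{t \in \{1,3,\ldots,2l-1\},\, t \neq s} \widetilde{E}_{s,t}$.

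Substituting into $\mu^* K_{\widehat Z} = \mu^* \theta^* K_Z + \sum_i \gamma_i \mu^* E_i$, the coefficient of each $\widetilde E_s$ equals $\gamma_s$, which matches $\frac{s^2}{2} - 1$ for even $s$ and $s^2 - 1$ for odd $s$. For odd $s, t$, the coefficient of $\widetilde E_{s,t}$ collects $\frac{\gamma_s}{2}$ from $\gamma_s \mu^* E_s$ and $\frac{\gamma_t}{2}$ from $\gamma_t \mu^* E_t$, yielding $\frac{1}{2}(\gamma_s + \gamma_t) = \frac{s^2 + t^2}{2} - 1$ as claimed. The most delicate point is the identification of the local lattice structure and the verification of crepancy in step (a); the remaining arithmetic is routine bookkeeping.
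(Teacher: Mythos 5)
Your proposal is correct and follows essentially the same route as the paper: pull back the formula of Lemma \ref{discre} along $\mu$, use that $\mu$ is crepant, and use $\mu^*E_s=\widetilde{E}_s+\frac{1}{2}\sum_{t}\widetilde{E}_{st}$ for odd $s$ (and $\mu^*E_s=\widetilde{E}_s$ for even $s$) to collect coefficients. The only difference is that you verify crepancy and the pullback formula by an explicit lattice/monomial computation in the local toric model, where the paper simply cites the Appendix construction and the local picture of blowing up $A_1$ surface singularities along a disc.
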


\begin{proof}
The map $\mu :\widetilde{Z}\to  \widehat{Z}$ is a toroidal morphism, with the corresponding toric geometry described 
In Appendix \ref{res}. We see that $\mu$ is crepant, so $\mu^*K_{\widehat Z} = K_{\widetilde Z}$. We also observe that the structure in codimension one is
that of introducing new divisors $E_{st}$ in the blowup of surface $A_1$ singularities along a disc. This means that for odd $s$ we have 
\begin{equation}\label{odd}
\mu^*E_s=\widetilde{E}_s+\frac{1}{2}\sum_{t \in \{1,3,\ldots, 2l-1\} \backslash \{s\}}\widetilde{E}_{st}.
\end{equation}
Together with $\mu^*E_s=\widetilde{E}_s$ for even $s$ and equation \eqref{gamma0} of Lemma \ref{discre}, we get the desired result.
\end{proof}

\subsection{Definition of modified discrepancies and Clifford-stringy Euler characteristics.}\label{mo-dis-cli}
Let $X$ be a singular variety with log-terminal singularities. Let $\pi:\widehat X \to X$ be a log resolution of $X$, i.e. a proper birational morphism from a smooth variety $\widehat X$ such that the exceptional locus is a divisor $\bigcup_{i=1}^kD_i$ with simple normal crossings. It is assumed that $X$ is $\mathbb{Q}$-Gorenstein,
which allows us to compare the canonical classes
$$
K_{\widehat X}\equiv \pi^*K_X + \sum_{i=1}^{k}\alpha_iD_{i}
$$
to define the discrepancies $\alpha_i$. The discrepancies satisfy $\alpha_i>-1$ by log-terminality assumption (see \cite{CKM}).
Recall that for any variety  $W$ (not necessarily projective) we can define the Hodge-Deligne polynomial $E(W;u,v)$ which
measures the alternating sum of dimensions
of the $(p,q)$ components of the mixed Hodge structure on the cohomology
of $W$ with compact support \cite{DKh}.
For a possibly empty subset $J$ of $\{1,\ldots,k\}$ we define by
$D_J^\circ$ the locally closed subset of $\widehat X$ which consists of points $z\in \widehat X$ that lie in $D_j$ if and only
if $j\in J$.

\medskip
Motivated by mirror symmetry calculations, Batyrev in \cite{Batyrev.1} has defined the following invariant of singular varieties with log-terminal singularities.
\begin{definition}\label{def.Est}
Stringy $E$-function of a variety $X$ with log-terminal singularities is defined by
$$
E_{st}(X;u,v):=\sum_{J\subseteq \{1,\ldots , k\}} E(D_J^\circ;u,v) \prod_{j\in J} \frac {uv-1}{(uv)^{\alpha_j+1}-1}.
$$
\end{definition}

\begin{remark}
Thus defined $E_{st}(X;u,v)$ does not depend on the choice of the log resolution $\widehat X$ of $X$, which justifies the notation.
It is in general only a rational function in fractional powers of $u,v$ but is polynomial in many cases of interest. Various specializations of $E_{st}(X;u,v)$ include the stringy $\chi_y$ genus $E_{st}(X;y,1)$ and the stringy Euler characteristics 
$$
\chi_{st}(X)=\lim_{(u,v)\to(1,1)}E_{st}(X;u,v)=\sum_{J\subseteq \{1,\ldots , k\}} \chi(D_J^\circ) \prod_{j\in J} \frac 1{\alpha_j+1}
$$
where $\chi$ denotes the Euler characteristics with compact support.
\end{remark}

\medskip
In a number of cases,  the log resolution of singularities $\widehat X\to X$ has an additional property of having the open strata $D_J^\circ$  form Zariski locally trivial fibrations over the corresponding
strata in $X$.  This is the case when $X$ has isolated singularities, but it also occurs more generally.
Notably, this  happens in the case of generic hypersurfaces and complete intersections in toric
varieties,  as well as in the case considered in this paper. We discuss this phenomenon below.

\begin{definition}\label{zariski}
We call a log resolution $\pi:\widehat X\to X$ as above \emph{Zariski locally trivial} if each $D_J^\circ$ is a Zariski locally trivial fibration over its image $\pi(D_J^\circ)$ in $X$.
\end{definition}

\begin{definition}\label{s}
Suppose that a singular variety $X$ admits a Zariski locally trivial log resolution
$\pi:\widehat X\to X$. For a point $y\in X$ define the local contribution of $y$ to $E_{st}(X;u,v)$ to
be
$$
S(y;u,v):=\sum_{J\subseteq \{1,\ldots , k\}} E(D_J^\circ\cap \pi^{-1}(y);u,v) \prod_{j\in J} \frac {uv-1}{(uv)^{\alpha_j+1}-1}.
$$
\end{definition}

\begin{remark}
Thus defined $S(y;u,v)$ is a constructible function on $X$ with values in the field of rational functions in fractional powers of $u$ and $v$. Indeed, if $y_1$ and $y_2$ are such that the set of $J$ with $\pi(D_J^{\circ})$ that contain $y_1$ is the same as those that contain $y_2$, then $S(y_1;u,v)=S(y_2;u,v)$.
Moreover, this function is independent from the choice of a Zariski locally trivial resolution. This follows from
the usual argument that involves weak factorization theorem \cite{AKMW}. Last but not least, there holds
\begin{equation}\label{Zarlogtriv}
E_{st}(X;u,v) = \sum_{i} E(X_i;u,v) S(y\in X_i;u,v),
\end{equation}
where $X=\bigsqcup_i X_i$ is the stratification of $X$ into the sets on which $S$ is constant. This follows immediately
from the multiplicativity of Hodge-Deligne $E$-functions for Zariski locally trivial fibrations, see \cite{DKh}.
\end{remark}

Recall that given a generic subspace $W\subseteq \Sym^2 V^*$ with $\dim W=n$ and $\dim V=2n$ Kuznetsov 
in \cite{ku1} constructs two varieties, the complete intersection of $n$ quadrics 
$$
Y_W\subseteq \Pp V =\{\Cc v\in \Pp V, \mbox{ such that} ~q(v,v)=0,~\mbox{for all } q\in W\}
$$
and a certain sheaf of Clifford algebras $\Bb_0$ on $\Pp W$ which can be generically viewed as a sheaf of Azumaya algebras over the double cover $Z=Z_W$ of $\Pp W$ considered in Section \ref{section2}. The equivalence of derived categories makes it plausible to expect that 
$$
E_{st}(Z;u,v) = E(Y_W;u,v),
$$
and in particular $\chi_{st}(Z)=\chi(Y_W)$. However, this is not the case for large enough $n$ (specifically, the stringy Euler characteristics statement first fails at $n=7$).

\bigskip
The main idea of this paper is that one can salvage the expected equality by a modification of the definition of the stringy Euler characteristics of $Z$, which we call \emph{Clifford-stringy} Euler characteristics of $Z$.
Recall that we have a resolution 
$$
\widehat Z\to Z
$$
with components of the exceptional divisor $\widetilde E$ indexed by the set $S=\{i|1\leq i\leq k\} \cup \{(i,j)|1\leq i < j \leq k$ and $i, j$ are odd$\}$. For a subset $J$  of $S$, we define us usual
\[\begin{split}
&\widetilde{E}_J=\bigcap_{\diamondsuit \in J}\widetilde{E}_{\diamondsuit}\text{, } \widetilde{E}_{\emptyset}=\widetilde{Z},\\
&\widetilde{E}_J^{\circ}=\bigcap_{\diamondsuit \in J}\widetilde{E}_{\diamondsuit}-
\bigcup_{\diamondsuit \in S-J}\widetilde{E}_{\diamondsuit}.
\end{split}\]

\begin{definition}\label{aij}
For $\omega=\theta\circ\mu: \widetilde{Z} \rightarrow Z$,
the modified discrepancies of the divisors $\widetilde{E}_{\diamondsuit}$ are defined by
\begin{equation}\label{gamma}
  a_{\diamondsuit}=\begin{cases}
    i^2-1, & \text{if $\diamondsuit=i, 1\leq i\leq k$, $i\text{ is odd}$};\\
    \frac{i^2}{2}-1, & \text{if $\diamondsuit=i, 1\leq i\leq k$,$i\text{ is even}$};\\
    \frac{1}{2}(i^2+j^2)-1+\frac{3}{2}(j-i), & \text{if $\diamondsuit=(i,j) \in S$}.
  \end{cases}
\end{equation}
\end{definition}

\bigskip

Comparing with equation \eqref{gamma} and Lemma \ref{discrep}, if $\diamondsuit=i, 1\leq i\leq k$, the modified discrepancies are same as the actual discrepancies; if  $\diamondsuit=(i,j) \in S$,
the modified discrepancies are different from the actual discrepancies. Let $d_{ij}$ be the actual discrepancy of $\widetilde{E_{ij}}$, where $1\leq i < j \leq k$. From Lemma \ref{discrep}, we know that $d_{ij}=\frac{1}{2}(i^2+j^2)-1$. Then we have $$a_{ij}=d_{ij}+\frac{3}{2}(j-i).$$

\bigskip
The following definition is motivated by the Euler characteristics of log-terminal pairs, considered, for example, by Batyrev in \cite{Bat3}

\begin{definition}\label{def.cst}
Clifford-stringy $E$-function of the variety $\widehat{Z}$ is defined by
$$
E_{cst}(Z;u,v):=\sum_{J\subseteq S} E(\widetilde{E}_J^{\circ};u,v) \prod_{\diamondsuit \in J} \frac {uv-1}{(uv)^{a_{\diamondsuit}+1}-1}.
$$
It is a priori only a rational function in fractional powers of $u,v$.
\end{definition}

Since $\chi(\widetilde{E}_J^{\circ})$ is the limit of $\sum_{J\subseteq S} E(\widetilde{E}_J^{\circ};u,v)$ as both $u$ and $v$ go to $1$, we get the definition of Clifford-stringy Euler Characteristics as follows.

\begin{definition}\label{defcli-stri}
Clifford-stringy Euler Characteristics of the variety $Z$ is defined by
$$\chi_{cst}(Z)=\sum_{J\subseteq S}\chi(\widetilde{E}_J^{\circ})\prod_{\diamondsuit \in J}\frac{1}{1+a_{\diamondsuit}}$$
where  $\chi$ denotes Euler characteristics with compact support.
\end{definition}

\medskip

\begin{remark}
The log resolution of singularities $\theta\circ\mu: \widetilde{Z} \rightarrow Z$ has the property that each $\widetilde{E}_J^{\circ}$ is a Zariski locally trivial fibration over its image $\omega=\theta\circ\mu(\widetilde{E}_J^{\circ})$ in $Z$. Thus, $\omega=\theta\circ\mu: \widetilde{Z} \rightarrow Z$ is a Zariski locally trivial log resolution. Thus, we can consider the local contributions to the Clifford-stringy Euler function and characteristics.
\end{remark}

\begin{definition}\label{contri}
For a point $y\in Z$, define the local contributions of $y$ to $E_{cst}(Z;u,v)$ and $\chi_{cst}(Z)$ to
be (respectively)
$$
S_{cst}(y;u,v):=\sum_{J\subseteq S} E(\widetilde{E}_J^{\circ}\cap \omega^{-1}(y);u,v) \prod_{\diamondsuit\in J} \frac {uv-1}{(uv)^{a_{\diamondsuit}+1}-1},
$$
$$
S_{cst}(y)=\sum_{J\subseteq S}\chi(\widetilde{E}_J^{\circ}\cap \omega^{-1}(y))\prod_{\diamondsuit \in J}\frac{1}{1+a_{\diamondsuit}}.$$
\end{definition}

\begin{remark}\label{cstofZ}
Thus defined $S(y;u,v)$ is a constructible function on $Z$ with values in the field of rational functions in $u$ and $v$. Indeed, if $y_1$ and $y_2$ are such that the set of $J$ with $\mu(\widetilde{E}_J^{\circ})$ that contain $y_1$ is the same as those that contain $y_2$, then $S(y_1;u,v)=S(y_2;u,v)$.
Similar as the equation \eqref{Zarlogtriv} for stringy $E$-function, there holds
\begin{equation}\label{cli-Zarlogtriv}
E_{cst}(Z;u,v) = \sum_{i} E(Z_i;u,v) S_{cst}(y\in Z_i;u,v),
\end{equation}
where $Z=\bigsqcup_i Z_i$ is the stratification of $Z$ into the sets on which $S$ is constant.
Also, we have $$\chi_{cst}(Z) = \sum_{i} \chi(Z_i) S_{cst}(y\in Z_i).$$
\end{remark}

Our main result is Theorem \ref{main} which states that 
$$\chi_{cst}(Z)=\chi(Y_W)$$
in any dimension. We will work our way towards it by computing the local contributions of points on $Z$ to the Clifford-stringy Euler characteristics.

\section{Description of fibers of $\theta$.}\label{fiber}
In this subsection, we give a description of the fibers of $\theta:\widehat{Z}\to Z$ and the strata of the stratification induced by $E_i$ and calculate their Euler characteristics.

\smallskip
We start with a preliminary proposition that finds the Euler characteristics of spaces of non-degenerate quadrics.

\begin{proposition}\label{chinondeg}
Let $M$ be a complex vector space of dimension $s$ and let $\Pp\Sym^2 M^*$ be the space of quadrics in $\mathbb{P}M$. There is an open subset 
$$(\Pp\Sym^2 M^*)_{nondeg}\subseteq \Pp\Sym^2 M^*$$
which consists of nondegenerate quadrics. Then its Euler characteristics (with compact support) is given by
\begin{equation}\label{nondege}
\chi((\Pp\Sym^2 M^*)_{nondeg})=\begin{cases}
    1, & \text{ when } s=1,2 ;\\
    0, & \text{ when } s\geq 3.
  \end{cases}
\end{equation}

\end{proposition}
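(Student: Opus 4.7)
The plan is to exploit a generic $\mathbb{C}^*$-action on $\Pp\Sym^2 M^*$ together with the standard identity $\chi(X) = \chi(X^{\mathbb{C}^*})$ for the Euler characteristic with compact support of a complex algebraic variety equipped with a $\mathbb{C}^*$-action. This identity is a consequence of additivity of $\chi$ and the vanishing $\chi(\mathbb{C}^*) = 0$, applied after stratifying the complement of the fixed locus by stabilizer type (each nontrivial orbit being a quotient of $\mathbb{C}^*$ by a finite cyclic group and hence contributing $0$).

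First I would pick a basis $x_1, \ldots, x_s$ of $M^*$ and integer weights $a_1, \ldots, a_s$ (for instance $a_i = 2^{i-1}$) chosen so that the sums $a_i + a_j$ for $1 \leq i \leq j \leq s$ are pairwise distinct. Letting $\mathbb{C}^*$ act on $M$ with weights $-a_i$ induces an action on $\Sym^2 M^*$ under which the monomial $x_i x_j$ has weight $a_i + a_j$, and hence a $\mathbb{C}^*$-action on $\Pp\Sym^2 M^*$. Distinctness of the weights forces the fixed locus on $\Pp\Sym^2 M^*$ to be precisely the $\binom{s+1}{2}$ coordinate points $[x_i x_j]$ for $1 \leq i \leq j \leq s$.

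Since nondegeneracy of a quadratic form is $\mathrm{GL}(M)$-invariant, the open set $(\Pp\Sym^2 M^*)_{nondeg}$ is preserved by this $\mathbb{C}^*$-action, so its fixed locus consists of those monomial quadrics $x_i x_j$ that have full rank $s$ on $M$. The quadric $x_i x_j$ has rank $1$ if $i = j$ and rank $2$ if $i < j$. It therefore achieves full rank exactly when either $s = 1$ and $(i,j) = (1,1)$, or $s = 2$ and $(i,j) = (1,2)$, yielding a unique nondegenerate fixed point in each of those cases and none for $s \geq 3$. Applying the fixed-locus identity then gives $\chi((\Pp\Sym^2 M^*)_{nondeg}) = 1$ for $s \in \{1,2\}$ and $\chi((\Pp\Sym^2 M^*)_{nondeg}) = 0$ for $s \geq 3$, which is exactly \eqref{nondege}. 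There is no serious obstacle here; the only points to verify carefully are the genericity of the chosen weights (so that all coordinate monomials are isolated fixed points) and the rank computation of the monomial quadrics.
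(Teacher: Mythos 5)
Your argument is correct, but it is genuinely different from the one in the paper. The paper never localizes with respect to a torus action: it stratifies the full space $\Pp\Sym^2 M^*$ according to the radical of the quadric, obtaining
$$
\chi(\Pp\Sym^2 M^*)=\sum_{0\leq i<s}\chi(Gr(i,M))\,\chi\bigl((\Pp\Sym^2\Cc^{s-i})_{nondeg}\bigr),
$$
reads this as a recursion $\frac{s(s+1)}2=\sum_{0\leq i<s}\binom{s}{i}r_{s-i}$ for $r_s=\chi((\Pp\Sym^2\Cc^{s})_{nondeg})$, and then checks that $r_1=r_2=1$, $r_{\geq 3}=0$ solves it via the identity $\frac{s(s+1)}2=\binom{s}{s-1}+\binom{s}{s-2}$; this is completely elementary, needing only additivity of $\chi_c$ and multiplicativity for the radical fibration. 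Your route instead invokes the localization identity $\chi_c(X)=\chi_c(X^{\Cc^*})$ for a generic one-parameter subgroup of $GL(M)$, reducing everything to the observation that the monomial quadrics $x_ix_j$ have rank $1$ or $2$, so a nondegenerate fixed point exists only for $s\leq 2$. This is slicker and produces the answer directly rather than verifying a guessed solution of a recursion, and your weight choice $a_i=2^{i-1}$ does make all sums $a_i+a_j$ ($i\leq j$) distinct, so the fixed locus is exactly the coordinate points as you claim. The price is that the localization identity itself, while standard, needs the stratification-by-stabilizer argument together with generic existence of quotients (or a reference) to be fully rigorous, whereas the paper's recursion is self-contained; conversely, the paper's approach requires already knowing (or guessing) the pattern $1,1,0,0,\dots$ before confirming it, which your computation avoids.
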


\begin{proof}
Since every quadric in $\Pp M$ can be identified with a nondegenerate quadric in $\Pp (M/M_1)$ for some subspace $0\subseteq M_1\subsetneq M$, we get a relation
$$
\chi(\Pp\Sym^2 M^*) = \sum_{0\leq i<s} \chi(Gr(i,M)) \,\chi( \Pp\Sym^2 \Cc^{s-i})_{nondeg}
$$
which leads to
\begin{equation}\label{rec}
\frac {s(s+1)}2 =  \sum_{0\leq i<s} 
\begin{pmatrix} 
s \\
i
\end{pmatrix}
\,\chi( \Pp\Sym^2 \Cc^{s-i})_{nondeg}.
\end{equation}
The equation \eqref{rec} can be viewed as a recursive relation on $r_s = \chi( \Pp\Sym^2 \Cc^{s})_{nondeg}$.
For $s=1$ it implies $r_1=1$. Thus it suffices to show that $r_1=r_2=1,~r_{\geq 3}=0$ satisfies \eqref{rec},
which is simply the identity
$$
\frac {s(s+1)}2 = \begin{pmatrix} 
s \\
s-1
\end{pmatrix} + 
\begin{pmatrix} 
s \\
s-2
\end{pmatrix}.
$$
\end{proof}
\smallskip

We define the strata of $\widehat{\Pp W}$ and $\widehat {Z}$ as usual. Namely, for a set
$I\subseteq\{1,2,\dots, k\}$, we define
$$E_I=\cap_{{i}\in I}E_{i}\text{,  } E_I^\circ=\cap_{{i}\in I}E_{i}-\cup_{{i}\in \{1,2,\dots,k\}-J}E_{{i}},$$
$$T_I=\cap_{{i}\in I}T_{i}\text{,  }T_I^\circ=\cap_{{i}\in I}T_{i}-\cup_{{i}\in \{1,2,\dots,k\}-J}T_{{i}}.$$

\smallskip

The next definition is important in selecting the strata that have nonzero Euler characteristics.
\begin{definition}\label{star}
We say that  a subset  $I=\{i_1,i_2,\dots,i_s\}\subseteq\{1,2,\dots, k\}$, where $i_1<i_2<\dots<i_s$,
satisfies condition $(*)$
if
$i_j-i_{j-1}\le 2$  for  $j=1,\dots,s$  (with $i_0=0$ by convention).
\end{definition}

\begin{proposition}
For some $1\leq t \leq k$, let $y$ be a point in $ \phi_{t,Z}^{\circ}$, i.e. $\sigma(y)\in \Pp W$ corresponds to a quadric of corank $t$. Let $I=\{i_1,i_2,\dots,i_s\}$, where $i_1<i_2<\dots<i_s=t$ be a subset of $\{1,\ldots, k\}$. We consider the stratum $T_I^\circ\subseteq Z$ that corresponds to $I$. If $I$ satisfies condition $(*)$ of Definition \ref{star}, we have
\begin{equation}\label{chiTI}
\chi(T_I^\circ\cap  \pi^{-1}(\sigma(y)))
 =t!\Big(\frac{1}{2}\Big)^{t-|I|}
\end{equation}
and we have $\chi(T_I^\circ\cap  \pi^{-1}(\sigma(y)))=0$ if $I$ does not satisfy $(*)$. Here we use $\chi$ to denote the Euler characteristics with compact support.
\end{proposition}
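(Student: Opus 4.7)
\medskip

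\noindent\textbf{Proof proposal.}

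The plan is to describe the fiber $\pi^{-1}(\sigma(y))$ explicitly in terms of refinements of the corank $t$ quadric by complete quadrics, and then read off the Euler characteristic using Proposition \ref{chinondeg} together with the standard torus-fixed-point count on partial flag varieties.

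First I would unpack the moduli description of $\widehat{\mathbb{P}W}$ from Section \ref{complete}. A point of $\pi^{-1}(\sigma(y))$ is a flag $0=F^0\subset F^1\subset\cdots\subset F^{l-1}\subset F^l=V$ together with non-degenerate forms $q_j$ on each $F^j/F^{j-1}$, whose image under $\pi$ is the quadric $q=\sigma(y)$ on $V$ with radical $V_1$ of dimension $t$. For $q$ to be the form induced by $q_l$ on $V/F^{l-1}$, one must have $F^{l-1}=V_1$, and $q_l$ is then forced to be the non-degenerate quadric on $V/V_1$ determined by $q$. Thus the free data is the remaining subflag of $V_1$ (which has dimension $t$) together with a non-degenerate quadratic form on each successive quotient. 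Passing to the divisorial stratification, the condition that the dimensions present in the flag are exactly $i_1<i_2<\cdots<i_s$ amounts to prescribing a flag $0\subset U_1\subset\cdots\subset U_s=V_1$ of $V_1$ with $\dim U_k=i_k$, together with a non-degenerate quadric on each $U_k/U_{k-1}$ for $k=1,\ldots,s$ (setting $U_0=0$).

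Next I would argue that the forgetful map from $T_I^\circ\cap\pi^{-1}(\sigma(y))$ to the partial flag variety $\mathrm{Fl}(i_1,\ldots,i_{s-1};V_1)$ that records the flag is a Zariski locally trivial fibration whose fiber over any point is the product
\[
\prod_{k=1}^{s}(\Pp\Sym^2(U_k/U_{k-1})^*)_{nondeg},
\]
since local trivializations of the tautological bundles over the flag variety trivialize the space of quadratic forms on each successive quotient. By multiplicativity of the Euler characteristic with compact support for Zariski locally trivial fibrations, the total Euler characteristic is the product of the Euler characteristic of the flag variety with $\prod_k \chi((\Pp\Sym^2\Cc^{i_k-i_{k-1}})_{nondeg})$. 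By Proposition \ref{chinondeg}, each factor in the latter product vanishes as soon as some $i_k-i_{k-1}\ge 3$; this immediately yields the vanishing statement when $I$ fails condition $(*)$.

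Finally, assuming $I$ satisfies $(*)$, every gap $i_k-i_{k-1}$ is $1$ or $2$, so each non-degenerate quadric factor contributes $1$. The Euler characteristic of the partial flag variety $\mathrm{Fl}(i_1,\ldots,i_{s-1};V_1)$ equals the number of torus fixed points, which is the multinomial coefficient $\binom{t}{i_1-i_0,\, i_2-i_1,\,\ldots,\, i_s-i_{s-1}}=t!/\prod_{k=1}^{s}(i_k-i_{k-1})!$. Since each gap is $1$ or $2$, and since the number of gaps equal to $2$ is $t-|I|$ (from $\sum(i_k-i_{k-1})=t$ and $\sum 1=|I|$), the denominator equals $2^{t-|I|}$, giving $t!\,(1/2)^{t-|I|}$ as claimed. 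The only step requiring any care is the Zariski local triviality of the fibration over the flag variety; once this is in hand, the rest is a direct application of Proposition \ref{chinondeg} and the classical Bruhat decomposition count.
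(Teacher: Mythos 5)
Your proposal is correct and follows essentially the same route as the paper: identify the fiber with flags inside the radical of $q$ equipped with non-degenerate quadrics on successive quotients, apply Proposition \ref{chinondeg} to kill all strata with a gap $i_k-i_{k-1}\ge 3$ (hence the vanishing when $(*)$ fails), and compute the remaining contribution as the Euler characteristic of the partial flag variety $t!/\prod_k(i_k-i_{k-1})!=t!\,2^{-(t-|I|)}$. The only difference is that you spell out the Zariski local triviality of the fibration over the flag variety, which the paper leaves implicit in its product formula.
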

\begin{proof}
For $y\in \phi_{t,Z}^{\circ}$, $\sigma(y)$ corresponds to a quadratic form $\mathbb C q\in \mathbb PW$ with corank $t$.
An element in $T_I^\circ\cap \pi^{-1}(\sigma(y))$ is a complete quadric which is mapped to $\mathbb C q$ under the map $\pi:\widehat{\mathbb PW}\to \mathbb PW$.
Then we have 
$$T_I^\circ\cap \pi^{-1}(\sigma(y))=\{0=F^0\subset F^1\subset \dots \subset F^s\subset F^{s+1}=V,  \mathbb{C}q_j\in \mathbb P\Sym^2(F^j/F^{j-1})^*, 
$$
$$
j=1,2,\dots,s|\dim F^j=i_j,\mathbb Cq\in \mathbb P\Sym^2(F^{s+1}/F^{s})^*\}$$
where all $\mathbb{C}q_j$ are nondegenerate and $F^s$ is the radical of $q$.
Thus we obtain the formula for the Euler characteristics of $T_I^\circ\cap \pi^{-1}(\sigma(y))$ 
$$
\chi(T_I^\circ\cap \pi^{-1}(\sigma(y)))=\chi(Fl(t; i_1,i_2-i_1,\dots,i_s-i_{s-1}))\prod_{j=1}^s \chi((\mathbb P\Sym^2 \Cc^{i_j-i_{j-1}})_{nondeg}),
$$ 
in terms of the Euler characteristics of the partial flag variety and the Euler characteristics of the spaces of smooth quadrics considered in Proposition \ref{chinondeg}.

\smallskip
By Proposition \ref{chinondeg}, $\chi((\mathbb P\Sym^2 M^*)_{nondeg})$ is zero when $\dim M\ge 3$ and is $1$ when $\dim M=1,2$. Therefore, $\chi(T_I^\circ\cap \pi^{-1}(\sigma(y)))=0$ if $I$ does not satisfy condition $(*)$ and is equal to $\chi(Fl(t; i_1,i_2-i_1,\dots,i_s-i_{s-1}))$ if $I$ satisfies it.
It remains to observe that in the latter case
$$
\chi(Fl(t; i_1,i_2-i_1,\dots,i_s-i_{s-1}))=\frac {t!} {\prod_{j}(i_j-i_{j-1})}
 =t!\Big(\frac{1}{2}\Big)^{t-|I|}.
$$
\end{proof}

\medskip
For a subset $I$ as above let us denote 
$I_\odd =\{i\in I| i\text{ is odd}\}$.
Then we have the following corollary.

\begin{corollary}\label{cchiEI}
Let $y$ be a point in $ \phi_{t,Z}^{\circ}$ for some $1\leq t \leq k$, and let $I=\{i_1,i_2,\dots,i_s\}$, where $i_1<i_2<\dots<i_s=t$. Recall that $E_I^\circ$ is the stratum of $\widehat{Z}$ that corresponds to $I$. If $I$ satifies $(*)$, we have
\begin{equation}\label{chiEI}
  \chi(E_I^\circ\cap \theta^{-1}(y))=\begin{cases}
    t!(\frac{1}{2})^{t-|I|},, & \text{ if } I_\odd \neq\emptyset;\\
    2t!(\frac{1}{2})^{t-|I|}, & \text{ if } I_\odd =\emptyset.
  \end{cases}
\end{equation}
and we have $\chi(E_I^\circ\cap \theta^{-1}(y))=0$ otherwise.
\end{corollary}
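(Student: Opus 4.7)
The strategy is to reduce the claim to the preceding Proposition by analyzing how the double cover $\rho$ restricts to the strata. The first step would be to identify the fiber $\theta^{-1}(y)$ set-theoretically. Since $y\in \phi_{t,Z}^{\circ}\subseteq \phi_{1,Z}$ lies on the ramification divisor of $\sigma$, the double cover $\sigma$ is locally one-to-one at $y$, so $\sigma^{-1}(\sigma(y))=\{y\}$. Combined with the commutativity $\sigma\circ\theta=\pi\circ\rho$, this gives $\theta^{-1}(y)=\rho^{-1}(\pi^{-1}(\sigma(y)))$ as sets, and in particular
$$
E_I^\circ\cap \theta^{-1}(y)=E_I^\circ\cap \rho^{-1}\bigl(T_I^\circ\cap \pi^{-1}(\sigma(y))\bigr).
$$

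Next, one verifies that $\rho^{-1}(T_I^\circ)$ is in fact contained in $E_I^\circ$. Indeed, if $j\in I$ then $T_I^\circ\subseteq T_j$, and since $E_j$ is the proper transform of $T_j$ under $\rho$, any preimage of a point of $T_I^\circ$ automatically lies on $E_j$. On the other hand, if $j\notin I$ then $T_I^\circ$ is disjoint from $T_j$, so the preimage stays off $E_j$. Consequently $\rho$ restricts to a finite morphism
$$
E_I^\circ\cap\theta^{-1}(y)\ \longrightarrow\ T_I^\circ\cap \pi^{-1}(\sigma(y)),
$$
and by multiplicativity of compact-support Euler characteristics under finite étale covers, the degree of this restriction will determine $\chi(E_I^\circ\cap\theta^{-1}(y))$ in terms of $\chi(T_I^\circ\cap \pi^{-1}(\sigma(y)))$.

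The key step is to compute this degree, and this is where the parity of elements of $I$ enters. From the local-analytic model of $\widehat{Z}$ recorded in Section~\ref{resoZ}, a neighborhood of a point above $T_I^\circ$ is (after taking the square-free part of $\prod t_i^{i}$) given by $y^2=\prod_{i\in I_{\odd}}t_i$ times a polydisc. When $I_{\odd}\neq\emptyset$, setting $t_i=0$ for $i\in I$ forces $y=0$, so $\rho$ restricts to a bijection on the stratum. When $I_{\odd}=\emptyset$, the right-hand side is a unit near the stratum, so the equation $y^2=\text{unit}$ splits into two disjoint sheets, yielding a trivial étale $2$-cover. Combining these two cases with the formula \eqref{chiTI} from the preceding proposition, I obtain the two values $t!(1/2)^{t-|I|}$ and $2\,t!(1/2)^{t-|I|}$ respectively. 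When $I$ fails condition $(*)$, the base $T_I^\circ\cap \pi^{-1}(\sigma(y))$ already has zero Euler characteristic, so the same multiplicativity forces $\chi(E_I^\circ\cap\theta^{-1}(y))=0$.

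No essential obstacle is expected: the only subtle point is keeping track of the local structure of the normalized double cover $\rho$ along the various strata, and that analysis has already been carried out in Section~\ref{resoZ} so it can be invoked verbatim.
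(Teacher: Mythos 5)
Your argument is correct and is essentially the paper's own proof: both reduce $\chi(E_I^\circ\cap\theta^{-1}(y))$ to $\chi(T_I^\circ\cap\pi^{-1}(\sigma(y)))$ from the preceding proposition, with the factor $1$ or $2$ determined by whether the stratum meets the ramification locus of $\rho$, i.e.\ whether $I_\odd\neq\emptyset$. You merely supply details the paper leaves implicit (the identification $\theta^{-1}(y)=\rho^{-1}(\pi^{-1}(\sigma(y)))$ via $y$ lying on the branch divisor of $\sigma$, and the local model justifying the degree of $\rho$ on each stratum), which is fine.
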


\begin{proof}
Since $\rho$ is a double cover which is ramified over $\{T_i| $ i is  odd$\}$, we have
\[\begin{split}
&\chi(E_I^\circ\cap \theta^{-1}(y))=\chi(T_I^\circ\cap  \pi^{-1}\circ\sigma(y)) \text{ if } I_\odd \neq\emptyset,\\
&\chi(E_I^\circ\cap \theta^{-1}(y))=2\chi(T_I^\circ\cap  \pi^{-1}\circ\sigma(y)) \text{ if } I_\odd =\emptyset.
\end{split}\]
Then by \eqref{chiTI}, we get \eqref{chiEI}.
\end{proof}

\section{The local contributions to the Clifford-stringy Euler characteristics of $Z$.}\label{contribu}
In this section, we calculate the local contribution of points on the strata $\phi^{\circ}_{t,Z}$ to the Clifford-stringy Euler characteristics of $Z$, where $\phi^{\circ}_{t,Z}=\phi_{t,Z}-\phi_{(t-1),Z}$ and $t \in \{1,2,\ldots,k\}$. We denote the local contribution by $\chi_t$. Then we will deduce
a better expression of the formula of the local contribution in Proposition \ref{chit}. Finally, we calculate it for small $t$ to observe an interesting $1,2,1,2, \ldots$ pattern, which will be proved in the next section.

\medskip

First, we recall the notations.
In Section \ref{Doub} and \ref{resoZ},
we have obtained a resolution of singularities of $Z$: $\theta\circ\mu: \widetilde{Z}\to \widehat{Z}\to Z$. Let us consider the set $S=\{i| 1\le i\le k\}\cup \{(i,j)|1\le i<j\le k, i,j\text{ is odd }\}$. All the exceptional divisors of $\widetilde{Z}$ are $\{\widetilde{E}_{\diamondsuit}| \diamondsuit\in S\}$.
For a set $J\subseteq S$, we define $$\mu(S)=\{\text{All the integers appear in } J\}$$ which is a subset of $\{1,2,\dots,k\}$.
For a set $I\subseteq\{1,2,\dots, k\}$, we define $\theta(I)=\max_{i\in I} i$.
For $J\subseteq S$ and $I\subseteq \{1,2,\dots, k\}$, we define
\[\begin{split}
&\widetilde{E}_J=\cap_{\diamondsuit\in J}\widetilde{E}_\diamondsuit \text{,  } \widetilde{E}_J^\circ=\cap_{\diamondsuit\in J}\widetilde{E}_\diamondsuit-\cup_{\diamondsuit\in S-J}\widetilde{E}_{\diamondsuit},\\
&E_I=\cap_{\diamondsuit\in I}E_\diamondsuit \text{,  } E_I^\circ=\cap_{\diamondsuit\in I}E_\diamondsuit-\cup_{\diamondsuit\in \{1,2,\dots,k\}-J}E_{\diamondsuit},\\
&T_I=\cap_{\diamondsuit\in I}T_\diamondsuit \text{,  } T_I^\circ=\cap_{\diamondsuit\in I}T_\diamondsuit-\cup_{\diamondsuit\in \{1,2,\dots,k\}-J}T_{\diamondsuit}.
\end{split}\]
For sets $J$ of integers also use the notation:
\begin{itemize}
\item $J_{\even}=\{i\in J| i\text{ is even}\}$;
\item $J_{\odd}=\{i\in J| i\text{ is odd}\}$.
\end{itemize}

\medskip
Given an arbitrary element $t \in \{1,2,\ldots,k\}$, we will define a function $G$ on the subsets of $\{1 \leq i \leq t| i $ is odd$\}$.
\begin{definition}\label{Grec}
Let $T \subseteq \{1 \leq i \leq t| i $ is odd$\}$. We define $G$ recursively by
\[\begin{split}
&G[T]=\frac{1}{r^2}, \text { when } T=\{r\} \text { and }\\
&G[T]=\frac{1}{1+a_{\min(T),\max(T)}}(G[T-\max(T)]+G[T-\min(T)]),
\end{split}\]
 where $a_{\min(T),\max(T)}$ is defined in Definition \ref{aij}.
We also 
define $F[T]=G[T]2^{|T|-1}$ which satisfies
\[\begin{split}
&F[T]=\frac{1}{r^2}, \text{ when } T=\{r\}\text { and }\\
&F[T]=\frac{2}{1+a_{\min(T),\max(T)}}(F[T-\max(T)]+F[T-\min(T)]).
\end{split}\]
\end{definition}

\begin{example} We illustrate Definition \ref{Grec} with two examples. For simplicity we write 
$G[1,3]$ instead of $G[\{1,3\}]$ and similarly for other sets and for $F$.
Recall that $a_{i,j}=\frac 12(i^2+j^2)-1+\frac 32(j-i)$, so $a_{1,3}=7$, $a_{3,5}=19$, $a_{5,7}=39$ and $a_{3,7}= 35$. We have 
\[\begin{split}
G[1,3]&=\frac{1}{1+a_{1,3}}(G[1]+G[3])= \frac{1}{8}(\frac 1{1} + \frac 1{9}) = \frac {5}{36};
\\
F[1,3]&=\frac 5{18};
\\
G[3,5,7]&=\frac{1}{1+a_{3,7}}(G[3,5]+G[5,7]) = \frac 1{36}(G[3,5]+G[5,7]) 
\\
&=\frac 1{36}\left(\frac 1{(1+a_{3,5})}(\frac 19 + \frac 1{25}) + \frac 1{(1+a_{5,7})}(\frac 1{25} + \frac 1{49})\right)
\\
&=\frac 1{36}\left(\frac 1{20}(\frac 19 + \frac 1{25}) + \frac 1{40}(\frac 1{25} + \frac 1{49})\right) = \frac {1999}{7938000};
\\
F[3,5,7]&=\frac{1999}{1984500}.
\end{split}\]
\end{example}

The next proposition explains why we choose to consider the fairly obscure recursive relations of Definition \ref{Grec}.
\begin{proposition}\label{chit}
The local contribution of $y \in \phi_{t,Z}^\circ$ to the Clifford-stringy Euler characteristics of $Z$ is given by
\begin{equation}\label{chitodd}
\chi_t=\sum_{T\subseteq\{1,3,5,\dots,t\},t\in T}t! F[T] \prod_{\substack{1\le r\le t,2|r,\\ r-1 \notin T \text{ or }r+1 \notin T}}\frac{1}{r^2}
\prod_{\substack{1\le r\le t,2|r, \\r-1 \in T,r+1\in T}}(\frac{1}{2^2}+\frac{1}{r^2})
\end{equation}for odd $t$ and by
\begin{equation}\label{chiteven}
\chi_t={2\, t!}\prod_{1\le r\le t,2|r }\frac{1}{r^2}+\sum_{\emptyset\ne T\subseteq\{1,3,5,\dots,t-1\}}
\frac { 2\,t!\,F[T]}{t^2} \prod_{\substack{1\le r<t,2|r,\\ r-1 \notin T \text{ or }r+1 \notin T}}\frac{1}{r^2}
\prod_{\substack{1\le r< t,2|r,\\ r-1 \in T,r+1\in T}}(\frac{1}{2^2}+\frac{1}{r^2})
\end{equation}for even $t$.
\end{proposition}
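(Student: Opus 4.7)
The plan is to unfold Definition \ref{contri} for $\chi_t$ and evaluate the sum in three stages: group $J\subseteq S$ by its integer shadow $\mu(J)$, reduce to a local "toric" sum $\Lambda(T)$ depending only on $T=I_\odd$, and sum over admissible shadows $I$.

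By Zariski local triviality of $\omega$ (the remark after Definition \ref{contri}), and because $\rho$ is unramified along the even divisors so the local toric structure of $\widehat Z$ at any point of $E_I^\circ\cap\theta^{-1}(y)$ depends only on $T=I_\odd$, one obtains the factored form $\chi_t=\sum_I\chi(E_I^\circ\cap\theta^{-1}(y))\prod_{r\in I_\even}(1+a_r)^{-1}\,\Lambda(T)$, in which $I$ runs over subsets of $\{1,\ldots,k\}$ with $\max I=t$ satisfying condition $(*)$ of Definition \ref{star}, and $\Lambda(T)=\sum_{J:\mu(J)=I}\chi(\widetilde E_J^\circ\cap\mu^{-1}(x))\prod_{\diamondsuit\in J\setminus I_\even}(1+a_\diamondsuit)^{-1}$ for any choice of $x\in E_I^\circ\cap\theta^{-1}(y)$ (the value being independent of the choice of $I$ with given $I_\odd=T$, since the even divisors do not enter the local toric geometry).

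The heart of the proof is the identity $\Lambda(T)=G[T]$. For $|T|=1$ the map $\mu$ is an isomorphism near the generic point of $E_I^\circ$, so $\Lambda(\{r\})=(1+a_r)^{-1}=1/r^2=G[\{r\}]$. For $|T|\ge 2$, by Appendix \ref{res} the toric resolution is built by a recursive triangulation whose outermost step introduces the ray $(e_{\min T}+e_{\max T})/2$ and hence the exceptional divisor $\widetilde E_{\min T,\max T}$; after this divisor is in place the local geometry decomposes into two smaller toric sub-problems indexed by $T\setminus\{\max T\}$ and $T\setminus\{\min T\}$. Enumerating the non-empty strata $\widetilde E_J^\circ$ in the toric fibre of $\mu$ (using $\chi(\Pp^1\setminus\{\text{two pts}\})=0$ and $\chi(\text{pt})=1$) produces by induction on $|T|$ exactly the recursion $G[T]=(1+a_{\min T,\max T})^{-1}(G[T\setminus\{\max T\}]+G[T\setminus\{\min T\}])$ of Definition \ref{Grec}.

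With $\Lambda(T)=G[T]$ established, we sum over admissible $I$ with $I_\odd=T$ fixed. Condition $(*)$ classifies each even $r\in[1,t]$ as \emph{optional} (when both $r-1,r+1\in T$, so $r$ may or may not lie in $I_\even$) or as \emph{forced} (when $r-1\notin T$ or $r+1\notin T$, in which case $r$ must lie in $I_\even$ to bridge the gap in $I$). Combining the factor $t!/2^{t-|I|}$ from Corollary \ref{cchiEI} (which contributes $1/2$ for each even $r\notin I_\even$) with $1/(1+a_r)=2/r^2$ for $r\in I_\even$, and using $G[T]=F[T]/2^{|T|-1}$ together with a count of evens in $[1,t]$, the sum over admissible $I_\even$ factorises into a product over evens, contributing $1/r^2$ for each forced $r$ and $1/4+1/r^2$ for each optional $r$. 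For odd $t$ this gives \eqref{chitodd} directly. For even $t$ we have $t\in I_\even$ contributing $1/(1+a_t)=2/t^2$, and the sum splits into $I_\odd=\emptyset$ (producing the first term $2t!\prod_{r\text{ even},~r\le t}r^{-2}$ of \eqref{chiteven}, with the factor $2$ coming from the doubled Euler characteristic in Corollary \ref{cchiEI}) and $T\ne\emptyset$ (producing the sum). The main obstacle is the identification $\Lambda(T)=G[T]$: it rests on the detailed triangulation of Appendix \ref{res} and requires careful tracking of the asymmetric modified discrepancies $a_{i,j}=\frac 12(i^2+j^2)-1+\frac 32(j-i)$—in particular one must verify that the modification $\frac 32(j-i)$ over the true discrepancy $d_{ij}$ only enters at the outermost pair of each recursive step, so that the fibre computation respects the min/max structure of the recursion defining $G$.
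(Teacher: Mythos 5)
Your proposal follows essentially the same route as the paper's proof: group the strata of $\widetilde Z$ by the shadow $I$ on $\widehat Z$ (with condition $(*)$ and Corollary \ref{cchiEI} supplying the Euler characteristics), identify the local toric fibre contribution with $G[T]$ via the zero-dimensional toric strata and the recursive min/max structure of the triangulation of Appendix \ref{res} (this is exactly Proposition \ref{GrecPhi} after the coordinate change $s\mapsto \frac{s+1}{2}$), and then carry out the same forced/optional bookkeeping over even indices, including the extra factor $2$ for $I_\odd=\emptyset$ when $t$ is even. The argument is correct and matches the paper's in all essential steps.
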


\begin{proof}
Let us first consider the case of odd $t$.
The fibers over $y$ of strata on $\widetilde Z$ map to strata on $\widehat Z$. We can collect these contributions according to the corresponding subset $I$ of $\{1,\ldots, t\}$. We observe that 
the contribution of the strata that map to  $E_I^\circ$ is  equal to zero if $I$ does not satisfy the condition $(*)$ 
of Definition \ref{star}  that the minimum element is at most $2$ and difference between consecutive elements is at most $2$. If $I$ satisfies $(*)$ then we claim that the contribution is equal to
\begin{equation}\label{toprove1}
 t! F[T] \Big(\prod_{r\in I_\even} \frac 1{r^2}\Big)  \Big(\prod_{2|r,\,r\not\in I_\even } \frac 1{2^2}\Big).
\end{equation}
Here $T=I_\odd $ is the set of all odd elements of $I$, which has to contain $t$.
This would prove the equation \eqref{chitodd}, because for a given $T=I_\odd $ the condition $(*)$  means that $I_\even$ must include all even $r$ such that either of $r+1$ or $r-1$ is not in $T$, but it may or may not include even $r$ such that both $r+1$ and $r-1$ are in $T$.

 \smallskip
The fiber of $\widetilde Z\to \widehat Z$ over a point $\hat y\in E_I^\circ$ 
is isomorphic to a toric variety given by the fan $\Sigma$ constructed in Appendix \ref{res} to resolve the singularities of the intersection of divisors $E_i$ for $i\in I_\odd $. The stratification of $\widetilde Z$ induces precisely the toric stratification on the fiber over $\hat y$. Most of the strata are tori of dimension at least $1$ and thus do not contribute to the Clifford-stringy Euler characteristics $S_{cst}$ in Definition \ref{contri}. 
The contributions of zero-dimensional strata come with the coefficients
$$
\Big(\prod_{r\in I_\even}\frac 1{1+a_{\{r\}}} \Big) G[I_\odd ] = \Big(\prod_{r\in I_\even}\frac 2{r^2} \Big) 2^{1-|I_\odd |}F[I_\odd ] .
$$
Indeed, the relevant contribution $G_\varphi$ of  Proposition  \ref{GrecPhi} satisfies the recursions of 
$G[T]$ after coordinate change $s\to \frac {s+1}2$. 
 
\smallskip
To get the contribution for each $I$ we need to further multiply by the Euler characteristics of the corresponding stratum on $\widehat Z$, given in Corollary \ref{cchiEI} to get 
$$
 \Big(\prod_{r\in I_\even}\frac 2{r^2} \Big) 2^{1-|I_\odd |}F[I_\odd ] 
t!  (\frac{1}{2})^{t-|I|}= t! F[T] \Big(\prod_{r\in I_\even} \frac 1{r^2}\Big) 2^{|I_\even | +1-|I_\odd | -t+|I|}
$$
$$
= t! F[T] \Big(\prod_{r\in I_\even} \frac 1{r^2}\Big)  \Big(\prod_{2|r,\,r\not\in I_\even} \frac 1{2^2}\Big).
$$
Here we used that $|\{2|r,\,r\not\in I_\even \}|= \frac {(t-1)}2 - |I_\even | $. This proves \eqref{toprove1} and thus \eqref{chitodd}.

\smallskip
When $t$ is even, the analysis is very similar. One difference is that we now have $t\in I_\even $. Another
is that we need to separately consider the case of Corollary \ref{cchiEI} with empty $I_\odd $. 
Specifically, if $I_\odd \neq \emptyset$, the contribution of $I$ that satisfies $(*)$ is equal to
$$
 2\,t! F[T] \Big(\prod_{r\in I_\even } \frac 1{r^2}\Big)  \Big(\prod_{2|r,\,r\not\in I_\even } \frac 1{2^2}\Big)
$$
because $|\{2|r,\,r\not\in I_\even \}|= \frac t2 - |I_\even | $.
When $I_\odd =\emptyset$, we must have $I$ to be all even numbers in $\{1,\ldots,t\}$ and Proposition \ref{cchiEI} gives an extra
factor of $2$ to give
$$
2 t! 2^{-\frac 12 t} \prod_{1<r\leq t, 2|r} \frac 2{r^2} =
2 t!  \prod_{1<r\leq t, 2|r} \frac 1{r^2}.
$$
This proves \eqref{chiteven}.
\end{proof}

Proposition \ref{chit} allows us to calculate $\chi_t$ for small $t$.
\[\begin{split}
&\chi_1=F[1]=1\\
&\chi_2 =2\cdot 2! \frac{1}{2^2}+\frac {2\cdot 2!\, F[1]}{2^2}=2\\
&\chi_3 =3!\big(\frac{1}{2^2}F[3]+(\frac 1{2^2} + \frac 1{2^2})F[1,3]\big)=1\\
&\chi_4 =2\cdot 4!(\frac{1}{2^2})(\frac{1}{4^2})+2\cdot 4!\big((\frac{1}{2^2})(\frac{1}{4^2})F[1]+ (\frac{1}{2^2})(\frac{1}{4^2})F[3]+ (\frac{1}{2^2} + \frac{1}{2^2})(\frac{1}{4^2})F[1,3]\big)=2\\
&\chi_5 = 5! \big((\frac{1}{2^2}) (\frac{1}{4^2}) F[5]+ (\frac{1}{2^2}) (\frac{1}{2^2} + \frac{1}{4^2}) F[3, 5]+  (\frac{1}{2^2}) (\frac{1}{4^2}) F[1, 5]\\
&\hskip 20pt
+  (\frac{1}{2^2} + \frac{1}{2^2}) (\frac{1}{2^2}
 + \frac{1}{4^2}) F[1, 3, 5]\big)=  1
\end{split}
\]
It is reasonable to expect that this pattern continues and, in fact, we prove it in the next section by a delicate recursive argument.

\begin{remark}
The contributions $\chi_t$ to Clifford-stringy Euler characteristics are different from those for the usual Batyrev's stringy Euler characteristics for $t\geq 3$, where the first non-trivial $F[T]$ appears.
It is crucial to change the discrepancies of $\widetilde E_{i,j}$ by extra $\frac 32(j-i)$ to get this pattern. In fact, we discovered this change by looking for discrepancies that may make such pattern possible. 
\end{remark} 



\section{Proof of the formula for $\chi_t$.}\label{sec.hard}
The goal of this section is to prove the $1,2,1,2,...$ pattern observed in the previous section.
\begin{proposition}
\begin{equation}\label{gammaa}
  \chi_t=\begin{cases}
    1, & \text{if $1\leq t\leq k,t\text{ is odd}$};\\
    2, & \text{if $1\leq t\leq k,t\text{ is even}$}.
  \end{cases}
\end{equation}
\end{proposition}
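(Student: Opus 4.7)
The plan is to prove the formula by induction on $t$, after first reformulating the sum of Proposition~\ref{chit} in a cleaner combinatorial form. With the change of index $k=(r+1)/2$ I can rewrite both cases as statements about subsets $\hat T\subseteq\{1,\ldots,m\}$ (where $t=2m-1$ in the odd case and $t=2m$ in the even case), with $T=\{2k-1\,:\,k\in\hat T\}$. Pulling out the prefactor $t!/\bigl(4^{m-1}((m-1)!)^2\bigr)$ in the odd case reduces the claim $\chi_{2m-1}=1$ to the closed form
\[
\Sigma_m \;:=\; \sum_{\hat T\ni m} F[T]\,N[\hat T] \;=\; \frac{4^{m-1}((m-1)!)^2}{(2m-1)!},\qquad N[\hat T]:=\prod_{k:\,k,k+1\in\hat T}(1+k^2).
\]
A short algebraic manipulation shows that the even claim $\chi_{2m}=2$ is equivalent to $\hat\Sigma_m:=\sum_{\emptyset\neq\hat T\subseteq\{1,\ldots,m\}}F[T]\,N[\hat T]=\binom{2m}{m}^{-1}4^m-1$, and since $\hat\Sigma_m-\hat\Sigma_{m-1}=\Sigma_m$ the two statements are equivalent; thus once the odd case is established the even case follows by a telescoping induction.

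The central task is then to derive the two-term recursion
\[
\Sigma_m \;=\; \tfrac{2m-2}{2m-1}\,\Sigma_{m-1}\qquad(m\ge 2),
\]
with $\Sigma_1=F[\{1\}]=1$. To do this I would apply the defining recursion $F[T]=\frac{2}{1+a_{\min T,\max T}}(F[T\setminus\max T]+F[T\setminus\min T])$ inside $\Sigma_m$, splitting the sum according to whether $m-1\in\hat T$ or not. Because the $F$-recursion pairs $\min T$ with $\max T$, the resulting terms do not immediately close, so I would introduce localized auxiliary sums of the form
\[
\Phi[a,b] \;:=\; \sum_{\substack{T\subseteq\{a,a+2,\ldots,b\}\\\min T=a,\ \max T=b}} F[T]\,N_{[a,b]}[T]
\]
for odd $a\le b$, and derive a coupled system of recursions for them by peeling off either endpoint. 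The target $\Sigma_m$ then decomposes as an explicit linear combination of the $\Phi[a,2m-1]$'s together with small boundary contributions, and the two-term recursion for $\Sigma_m$ should telescope out of this system.

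The main obstacle — and the reason Section~\ref{sec.hard} is the longest in the paper — is the delicate matching between the $F$-recursion, which involves both endpoints of $T$, and the bracketing factors $(1+k^2)$ in $N[\hat T]$, which reflect only local structure of $\hat T$. The modified discrepancy offset $\tfrac{3}{2}(j-i)$ inserted into $a_{i,j}$ in Definition~\ref{aij} is engineered precisely so that the coefficients $\frac{2}{1+a_{i,j}}$ interlock with the factors $(1+k^2)$ to produce a telescoping pattern; with any other offset the cancellation would fail, as the authors note in the remark at the end of Section~\ref{contribu}. Carrying this cancellation through the coupled recursion for the $\Phi[a,b]$ is the cumbersome technical core of the proof, and once it is accomplished the desired $1,2,1,2,\ldots$ pattern falls out by a direct induction.
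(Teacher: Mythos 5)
Your preliminary reductions are correct: the normalization pulling out $\prod_{2|r}\tfrac1{r^2}$, the identity $\hat\Sigma_m-\hat\Sigma_{m-1}=\Sigma_m$, and the resulting reduction of the even case to the odd one are all sound (this part is essentially the paper's Lemma \ref{evenodd} in a different dress), and the two-term recursion $\Sigma_m=\tfrac{2m-2}{2m-1}\Sigma_{m-1}$ you aim for is indeed equivalent to the odd claim — it is what the paper's Corollary \ref{deltarecu} yields at $a=0$, since $\chi_{2j+1}=(2j+1)!\,\delta[j,0]$. The problem is that the entire content of the proposition lives in proving that recursion, and your proposal does not do it. Applying the $F$-recursion inside $\Sigma_m$ produces, from the term where the maximum is removed, a sum whose new maximum is the second-largest element of $T$ and, from the term where the minimum is removed, a sum whose new minimum is the second-smallest element; your localized sums $\Phi[a,b]$ with prescribed $\min T=a$, $\max T=b$ therefore do not satisfy a closed finite system: peeling an endpoint forces a convolution over all possible new endpoints, with coefficients $\tfrac{4}{a^2+b^2+3(b-a)}$ depending on both, and with the bracketing factors $N[\hat T]$ changing near the removed endpoint. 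Nothing in the sketch shows this system telescopes, and "should telescope out" is precisely the step that needs proof.

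For comparison, the paper closes the recursion only after two additional ideas that are absent from your plan: (i) it enlarges the family to a one-parameter shifted sum $\delta[j,a]$ (the sums $F[T+a]H[T,2,2j,a]$), and (ii) it uses the reflection symmetry $F[x_1,\dots,x_n]=F[-x_n,\dots,-x_1]$ (Lemma \ref{j-a}, together with Lemma \ref{jas}) to convert the "minimum removed / minimum forced to be $1$" sums back into values of the same function $\delta$ at the reflected argument $-a-2-2j$. This yields the five-term recursion of Proposition \ref{recursive} in the two variables $(j,a)$, which is then solved by guessing the closed form $\delta[j,a]=a\sum_{i=0}^{j}\tfrac{1}{(a+2i)(a+1+2i)}\big/\prod_{i=1}^{2j+1}(a+i)$ and verifying it by induction. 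If you want to salvage your route, you would either have to rediscover some analogue of the shift-and-reflect mechanism (fixing only the endpoints $a,b$ without a shift parameter is unlikely to suffice), or produce and solve the coupled $\Phi[a,b]$ system explicitly; as written, the argument for the $1,2,1,2,\dots$ pattern has a genuine gap at its core step.
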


Before starting the proof of Proposition \ref{gammaa}, we will prove several lemmas.
\begin{lemma}\label{evenodd}
For all $j\geq 1$ there holds
$$\chi_{2j}=\frac{1}{j}\chi_{2j-1}+\frac{2j-1}{2j}\chi_{2j-2}.$$
\end{lemma}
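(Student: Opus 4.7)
I would split the expansion \eqref{chiteven} of $\chi_{2j}$ into two blocks according to whether $2j-1\in T$, and identify them respectively with $\tfrac{2j-1}{2j}\chi_{2j-2}$ and $\tfrac{1}{j}\chi_{2j-1}$. The identity is then immediate. No recursion for $F[T]$ is needed at this stage: each $F[T]$ will appear identically on both sides of the matching, and the entire argument is a bookkeeping of the product factors in \eqref{chitodd} and \eqref{chiteven}.

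For the block where $2j-1\notin T$, I would treat the free summand $2(2j)!\prod_{1\le r\le 2j,\,2|r}r^{-2}$ of \eqref{chiteven} as the $T=\emptyset$ contribution, and let $T$ otherwise range over non-empty subsets of $\{1,3,\ldots,2j-3\}$. For such $T$ the even index $r=2j-2$ has its right neighbour $2j-1$ outside $T$, so its factor in the product is simply $1/(2j-2)^2$ regardless of whether $2j-3\in T$. Pulling this factor out, the surviving product (over even $r<2j-2$) is literally the one appearing in the expansion of $\chi_{2j-2}$, and the conditions on $T$ are the same. Comparing the overall constants gives the ratio
\[
\frac{2(2j)!F[T]/(2j)^2 \cdot 1/(2j-2)^2}{2(2j-2)!F[T]/(2j-2)^2} \;=\; \frac{(2j)(2j-1)}{(2j)^2} \;=\; \frac{2j-1}{2j}
\]
between matching non-empty summands, and the same ratio appears between the two free terms (since $\prod_{r\le 2j}r^{-2}/\prod_{r\le 2j-2}r^{-2}=1/(2j)^2$). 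Thus this block contributes $\tfrac{2j-1}{2j}\chi_{2j-2}$.

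For the block where $2j-1\in T$, I would match term-by-term with the expansion \eqref{chitodd} of $\chi_{2j-1}$, which also sums over $T\subseteq\{1,3,\ldots,2j-1\}$ containing $2j-1$. The product ranges (even $r<2j$ versus even $r\le 2j-1$) coincide, and the conditional factors at every even $r$, including $r=2j-2$, depend only on which odd neighbours of $r$ lie in $T$ and are therefore identical in the two formulas. The only non-trivial check is the constant ratio $\frac{2(2j)!/(2j)^2}{(2j-1)!}=\frac{1}{j}$.

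I do not foresee any serious obstacle: the $F[T]$ quantities cancel out of every individual comparison, so the proof reduces to matching elementary polynomial factors. The most likely source of bookkeeping errors is the empty-$T$ slot of the $2j-1\notin T$ block, where one must verify that the extra factor $1/(2j)^2$ coming from $r=2j$ in the free term of \eqref{chiteven} is precisely what yields the ratio $\tfrac{2j-1}{2j}$ against the free term of $\chi_{2j-2}$; once this is in place the two blocks assemble into the claimed identity.
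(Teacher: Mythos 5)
Your proposal is correct and follows essentially the same route as the paper's own proof: both split the sum in \eqref{chiteven} according to whether $2j-1\in T$, use that when $2j-1\notin T$ the factor at the even index $r=2j-2$ reduces to $1/(2j-2)^2$ so the remaining product matches the one in the expansion of $\chi_{2j-2}$, and then compare the elementary constant ratios $\tfrac{2j-1}{2j}$ and $\tfrac{1}{j}$. No further comment is needed.
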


\begin{proof}
For any even number $s$ and $T\subseteq\{1,3,5,\dots,2j-1\}$, let $$H[T,s]=\left(\prod_{\substack{1\le r\leq s,2|r, \\r-1 \notin T \text{ or }r+1 \notin T}}\frac{1}{r^2}\right)
\left(\prod_{\substack{1\le r\leq s,2|r, \\r-1 \in T,r+1\in T}}(\frac{1}{2^2}+\frac{1}{r^2})\right).$$ Then according to Proposition \ref{chit}, we have
\begin{equation}\label{2j-1}
\begin{split}
&\chi_{2j-1}=(2j-1)!\sum_{\substack{T\subseteq\{1,3,5,\dots,2j-1\},\\2j-1\in T}}H[T,2j-2]F[T],\\
&\chi_{2j-2}=2 (2j-2)!\prod_{\substack{1\le r\le 2j-2,\\2|r }}\frac{1}{r^2}+\frac{2 (2j-2)!}{(2j-2)^2}\sum_{\emptyset\ne T\subseteq\{1,3,5,\dots,2j-3\}}H[T,2j-4]F[T].
\end{split}
\end{equation}
We separate the sum in 
$$\chi_{2j} = 2 (2j)!\prod_{1\le r\le 2j,2|r }\frac{1}{r^2}+\frac{2 (2j)!}{(2j)^2}\sum_{\emptyset\ne T\subseteq\{1,3,5,\dots,2j-1\}}H[T,2j-2]F[T]$$
into two parts according to whether $2j-1 \notin T$ or $2j-1 \in T$ to get
\[\begin{split}
& \chi_{2j}=2(2j)!\prod_{\substack{1\le r\le 2j,\\2|r }}\frac{1}{r^2}+\frac{2 (2j)! }{(2j)^2}\sum_{\substack{\emptyset\ne T\subseteq\{1,3,5,\dots,2j-1\}\\2j-1 \notin T}} H[T,2j-2]F[T]\\
& +\frac{2 (2j)!}{(2j)^2}\sum_{\substack{\emptyset\ne T\subseteq\{1,3,5,\dots,2j-1\}\\2j-1 \in T}} H[T,2j-2]F[T]\\
& =2(2j)!\frac{1}{(2j)^2}\prod_{\substack{1\le r\le 2j-2,\\2|r }}\frac{1}{r^2}+\frac{2(2j)!}{(2j)^2}\frac{1}{(2j-2)^2}\sum_{\substack{\emptyset\ne T\subseteq\{1,3,5,\dots,2j-3\}}} H[T,2j-4]F[T]\\
& +\frac{(2j-1)!}{j}\sum_{\substack{\emptyset\ne T\subseteq\{1,3,5,\dots,2j-1\}\\2j-1 \in T}} H[T,2j-2]F[T] =\frac{2j-1}{2j}\chi_{2j-2}+\frac{1}{j}\chi_{2j-1},
\end{split}\]where we use \eqref{2j-1} in the last step.
\end{proof}

\smallskip
It is more complicated to get a recursion of $\chi_t$ for odd $t$. 
It turns out that it is easier to get a recursive relation on a more general function $\delta[j,a]$ which we now introduce.

\smallskip
Recall that 
In Section \ref{contribu}, we defined the function $F$ on the sets of odd positive integers by 
\[\begin{split}
& F[x] := 1/x^2;\\
& F[x_1, x_2] := (F[x_1] + F[x_2])  \left(\frac 4{x_1^2 + x_2^2 + 3 (x_2 - x_1)}\right);\\
& F[x_1, x_2, x_3] := (F[x_1, x_2] +
     F[x_2, x_3]) \left(\frac 4{x_1^2 + x_3^2 + 3 (x_3 - x_1)}\right);\\
& \ldots \\
& F[x_1, \ldots, x_n] := (F[x_1,\ldots ,x_{n-1}] +
     F[x_2, \ldots , x_n])  \left(\frac 4{x_1^2 + x_n^2 + 3 (x_n - x_1)}\right).
\end{split}\]
\begin{remark}\label{F[T]}
We can clearly extend this definition to arbitrary $x_i$, provided that we do not divide by zero.
It is also clear from the definition that $F[x_1, \ldots, x_n]=F[-x_n , \ldots, -x_1]$.
\end{remark}

\begin{definition}
Let $T=\{x_1,\dots,x_n\}$ be a set of integers and $x_1<x_2<\dots<x_n$. Define $$F[T+a]:=F[x_1+a,x_2+a,\dots,x_n+a]$$as a rational function of $a$.
For arbitrary two even numbers $s_1 \leq s_2$, let
\begin{equation}\label{HT}
H[T,s_1,s_2,a]=\prod_{\substack{s_1\leq r\leq s_2,2|r,\\ r+1,r-1\in T}}(\frac{1}{4}+\frac{1}{(r+a)^2})\prod_{\substack{s_1\leq r\leq s_2,2|r,\\ r+1 \notin T \text{ or } r-1 \notin T}}\frac{1}{(r+a)^2}.
\end{equation}
We define for $j\geq 1$
$$
\delta[j,a]=
\sum_{\substack{T\subseteq\{1,3,5,\dots,2j+1\},\\ 2j+1\in T}}F[T+a]H[T,2,2j,a].
$$
\end{definition}

\begin{lemma}\label{jas}
We have
\begin{equation}
\sum_{\substack{T\subseteq\{1,3,\dots,2j+1\},\\1\in T,2j+1\in T}}F[T+a]H[T,2,2j,a]=\delta[j,a]-\frac{1}{(a+2)^2}\delta[j-1,a+2].
\end{equation}
\end{lemma}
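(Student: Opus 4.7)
The plan is to split the sum defining $\delta[j,a]$ according to whether or not $1\in T$: the $1\in T$ portion is, by construction, precisely the left-hand side of the claimed identity, and I will show that the $1\notin T$ portion equals $\frac{1}{(a+2)^2}\delta[j-1,a+2]$ via the shift $t\mapsto t-2$. Concretely, I would begin by writing
$$\delta[j,a]=\sum_{\substack{T\subseteq\{1,3,\dots,2j+1\}\\ 1\in T,\,2j+1\in T}}F[T+a]\,H[T,2,2j,a]\;+\;\sum_{\substack{T\subseteq\{3,5,\dots,2j+1\}\\ 2j+1\in T}}F[T+a]\,H[T,2,2j,a],$$
so the lemma reduces to identifying the second sum with $\frac{1}{(a+2)^2}\delta[j-1,a+2]$.

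For $T$ in the second sum, set $T''=\{t-2:t\in T\}\subseteq\{1,3,\dots,2j-1\}$, which contains $2j-1$. Directly from the definition $F[T+a]=F[x_1+a,\dots,x_n+a]$, one obtains $F[T+a]=F[T''+(a+2)]$. For $H[T,2,2j,a]$, the boundary factor at $r=2$ falls in the ``simple'' case because $1=r-1\notin T$, and so contributes exactly $\frac{1}{(a+2)^2}$. For each remaining index $r\in\{4,6,\dots,2j\}$, setting $r'=r-2\in\{2,4,\dots,2j-2\}$ gives $r\pm 1\in T$ iff $r'\pm 1\in T''$, and $r+a=r'+(a+2)$; therefore the factor at $r$ in $H[T,2,2j,a]$ matches the factor at $r'$ in $H[T'',2,2j-2,a+2]$. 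Altogether
$$H[T,2,2j,a]=\frac{1}{(a+2)^2}\,H[T'',2,2j-2,a+2].$$

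Since $T\mapsto T''$ is a bijection between the indexing set of the second sum and $\{T''\subseteq\{1,3,\dots,2j-1\}:2j-1\in T''\}$, summing $F[T+a]\,H[T,2,2j,a]$ over the former produces $\frac{1}{(a+2)^2}\delta[j-1,a+2]$ by the very definition of $\delta$, which finishes the argument. The only point requiring genuine care — and the step I would double-check — is the translation of the $H$-product under $r\mapsto r-2$: one must isolate the boundary factor at $r=2$ (which is automatically of ``simple'' type because its left neighbor $1$ is missing from $T$) and confirm that the combinatorial condition ``both $r\pm 1\in T$'' on the remaining interior factors transports exactly to the condition ``both $r'\pm 1\in T''$'' that governs $H[T'',2,2j-2,a+2]$.
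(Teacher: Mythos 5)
Your proposal is correct and follows essentially the same route as the paper's proof: split $\delta[j,a]$ according to whether $1\in T$, and identify the $1\notin T$ part with $\frac{1}{(a+2)^2}\delta[j-1,a+2]$ via the shift $T\mapsto T-2$, with the factor $\frac{1}{(a+2)^2}$ coming from the $r=2$ term of $H$. Your version just spells out the bookkeeping for $F$ and $H$ under the shift in more detail than the paper does.
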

\begin{proof}
We separate $\delta[j,a]$ into two parts according to whether $1 \in T$ or $1 \notin T$. If $1 \notin T$, the corresponding term for $\delta[j,a]$ differs from the term for $T-2$ for $\delta[j-1,a+2]$ by the factor $\frac{1}{(a+2)^2}$ that comes from the second product in \eqref{HT} for $r=2$.
\end{proof}


\begin{lemma}\label{j-a}
We have
$$\delta[j,-a-2-2j]=\sum_{\substack{T\subseteq\{1,3,\dots,2j+1\},\\1\in T}}F[T+a]H[T,2,2j,a].$$
\end{lemma}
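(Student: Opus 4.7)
The plan is to exhibit a reflection symmetry on the index set $\{1,2,\ldots,2j+1\}$, namely the involution $\iota(x) := (2j+2)-x$. This involution preserves parity and reverses order within the odd elements $\{1,3,\ldots,2j+1\}$ and within the even elements $\{2,4,\ldots,2j\}$. For $T\subseteq\{1,3,\ldots,2j+1\}$ write $T' := \iota(T)$; the map $T \mapsto T'$ restricts to a bijection between $\{T : 2j+1\in T\}$ and $\{T : 1\in T\}$, so the two sums are indexed on equal footing and it suffices to prove the termwise identity $F[T+(-a-2-2j)]\,H[T,2,2j,-a-2-2j]=F[T'+a]\,H[T',2,2j,a]$.

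For the $F$-factor, the key input is the reversal identity $F[x_1,\ldots,x_n]=F[-x_n,\ldots,-x_1]$ from Remark \ref{F[T]}. Writing $T=\{x_1<\cdots<x_n\}$ and $b=-a-2-2j$, the reversal sends $F[T+b]$ to $F[-x_n-b,\ldots,-x_1-b]$. Since $-x_i-b = (2j+2-x_i)+a$ and $x_i\mapsto 2j+2-x_i$ reverses order, the list on the right is precisely the elements of $T'+a$ in increasing order, giving $F[T+b]=F[T'+a]$.

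For the $H$-factor, I would reindex the product by setting $r'' := (2j+2)-r$, which is a bijection on the even index set $\{2,4,\ldots,2j\}$. Two observations then match factors at $r$ with factors at $r''$: first, $r+b = -(r''+a)$, so $(r+b)^2 = (r''+a)^2$; second, $r+1\in T$ iff $r''-1\in T'$ and $r-1\in T$ iff $r''+1\in T'$, so the logical condition ``both $r\pm 1\in T$'' becomes ``both $r''\mp 1\in T'$'' and its negation transforms likewise. Hence the factor at $r$ in $H[T,2,2j,b]$ equals the factor at $r''$ in $H[T',2,2j,a]$, and the two products agree after reindexing.

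The argument is essentially a bookkeeping exercise once the symmetry $\iota$ is identified, and I do not anticipate a substantive obstacle. The one point requiring genuine care is ensuring that the pairing $r\leftrightarrow r''$ in $H$ correctly matches the ``$\tfrac14+\tfrac1{(r+b)^2}$'' factors with the corresponding ``$\tfrac14+\tfrac1{(r''+a)^2}$'' factors --- this is what forces the use of the order-reversing, parity-preserving involution $\iota$ rather than any simpler substitution.
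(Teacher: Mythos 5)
Your proof is correct and takes essentially the same route as the paper: the involution $T\mapsto (2j+2)-T$ giving the bijection between subsets containing $2j+1$ and those containing $1$, the reversal identity of Remark \ref{F[T]} for the $F$-factor, and the identity $(r-a-2-2j)^2=((2j+2-r)+a)^2$ for the $H$-factor. The only difference is that you make explicit the matching of the adjacency conditions $r\pm 1\in T$ under the reindexing $r\mapsto (2j+2)-r$, which the paper leaves implicit.
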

\begin{proof}
By definition of $\delta$, we have
$$
\delta[j,-a-2-2j]=\sum_{\substack{T\subseteq\{1,3,\dots,2j+1\},\\2j+1\in T}}F[T-a-2-2j]H[T,2,2j,-a-2-2j].
$$
The map $T\mapsto (2j+2)-T$ provides a bijection between the set of subsets $T$ of $\{1,3,\dots,2j+1\}$ that contain $(2j+1)$ and
the set of subsets $T$ that contain $1$. Therefore,
it suffices to show that for any $T\subseteq \{1,3,\dots,2j+1\}$ there hold
$
F[T-a-2-2j] = F[(2j+2-T)+a]$ and $H[T,2,2j,-a-2-2j]=H[(2j+2-T),2,2j,a]
$.
The first statement is the consequence of Remark  \ref{F[T]}, and the second statement follows from
$$
\frac 1{(r-a-2-2j)^2} = \frac1{((2j+2-r)+a)^2}.
$$
\end{proof}


The next proposition is a key technical step of the paper.
\begin{proposition}\label{recursive}
For any $j\geq 2$, we have the recursive relation
\begin{equation}\label{recurformu}
\begin{split}
\delta[j+1,a]&=\big((1+\frac{4}{c})\frac{1}{(a+2)^2}+\frac{1}{c}\big)\delta[j,a+2]-\frac{1}{c}\frac{1}{(a+4)^2}\delta[j-1,a+4])\\
&+\frac{1}{c}\delta[j,a]-\frac{1}{c}\frac{1}{(a+2)^2}\delta[j-1,a+2]+\frac{4}{c}\frac{1}{(a+2j+2)^2}\delta[j,-a-2j-2],
\end{split}
\end{equation}
where $c=(a+1)^2+(a+2j+3)^2+6j+6$.
\end{proposition}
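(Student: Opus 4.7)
The plan is to split $\delta[j+1,a]$ into two parts according to whether $1\in T$, handle the part with $1\notin T$ by a direct rewriting, and expand the part with $1\in T$ via the $F$-recursion of Definition \ref{Grec} applied to the extreme elements $1+a$ and $2j+3+a$ of $T+a$. Write $\delta[j+1,a]=\delta_0+\delta_1$, where $\delta_0$ sums over $T$ with $2j+3\in T$ and $1\notin T$, and $\delta_1$ over $T$ with $1,2j+3\in T$. Taking the complement of the identity in Lemma \ref{jas} at level $j+1$ gives at once $\delta_0=\tfrac{1}{(a+2)^2}\delta[j,a+2]$, which contributes the ``pure'' $\tfrac{1}{(a+2)^2}\delta[j,a+2]$ piece of the coefficient of $\delta[j,a+2]$ in the claim.

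For $\delta_1$, each $T$ in the sum has minimum $1$ and maximum $2j+3$, so the $F$-recursion yields
\[
F[T+a]=\tfrac{4}{c}\bigl(F[(T\setminus\{2j+3\})+a]+F[(T\setminus\{1\})+a]\bigr),
\]
with denominator $c=(a+1)^2+(a+2j+3)^2+3(2j+2)$ matching the claim. Thus $\delta_1=\tfrac{4}{c}(A+B)$, where $A$ and $B$ are the sums obtained from the two summands. I compute $A$ directly: setting $T'=T\setminus\{2j+3\}$, the $r=2j+2$ factor of $H[T,2,2j+2,a]$ equals $\tfrac14+\tfrac{1}{(a+2j+2)^2}$ when $2j+1\in T'$ and $\tfrac{1}{(a+2j+2)^2}$ otherwise, while the remaining factors assemble into $H[T',2,2j,a]$. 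Splitting the sum by whether $2j+1\in T'$ and applying Lemma \ref{jas} (resp.\ Lemma \ref{j-a}) to the two resulting subsums gives
\[
A=\tfrac{1}{4}\bigl(\delta[j,a]-\tfrac{1}{(a+2)^2}\delta[j-1,a+2]\bigr)+\tfrac{1}{(a+2j+2)^2}\delta[j,-a-2j-2].
\]

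The key to handling $B$ is the involution $T\mapsto (2j+4)-T$ on subsets of $\{1,3,\dots,2j+3\}$, which swaps $1$ with $2j+3$ and therefore carries $T\setminus\{1\}$ to $((2j+4)-T)\setminus\{2j+3\}$. Combined with the reversal $F[x_1,\dots,x_n]=F[-x_n,\dots,-x_1]$ of Remark \ref{F[T]} and the identity $\tfrac{1}{(r+a)^2}=\tfrac{1}{((2j+4-r)+(-2j-4-a))^2}$ matching the $H$-factors after the swap, this identifies $B$ with the value of $A$ at $a\mapsto -2j-4-a$, hence
\[
B=\tfrac{1}{4}\delta[j,-2j-4-a]-\tfrac{1}{4(a+2j+2)^2}\delta[j-1,-2j-2-a]+\tfrac{1}{(a+2)^2}\delta[j,a+2].
\]
The main obstacle is then the appearance of $\delta[j,-2j-4-a]$, which is not among the terms on the right-hand side of the claim. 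I dispose of it by one more application of Lemma \ref{j-a} at parameter $a+2$ to rewrite it as $\sum_{T\ni 1}F[T+a+2]H[T,2,2j,a+2]$, and then split this sum by whether $2j+1\in T$: Lemma \ref{jas} at $a+2$ evaluates the piece with $2j+1\in T$ as $\delta[j,a+2]-\tfrac{1}{(a+4)^2}\delta[j-1,a+4]$, while Lemma \ref{j-a} at level $j-1$ evaluates the piece with $2j+1\notin T$ as $\tfrac{1}{(a+2j+2)^2}\delta[j-1,-2j-2-a]$, which cancels exactly against the $\delta[j-1,-2j-2-a]$ term already present in $B$. Assembling $\delta[j+1,a]=\delta_0+\tfrac{4}{c}(A+B)$ and collecting coefficients reproduces the asserted recursion, with the factor $\bigl(1+\tfrac{4}{c}\bigr)\tfrac{1}{(a+2)^2}+\tfrac{1}{c}$ in front of $\delta[j,a+2]$ emerging from combining the $\tfrac{1}{(a+2)^2}\delta[j,a+2]$ contribution of $\delta_0$, the $\tfrac{4}{c(a+2)^2}\delta[j,a+2]$ term inside $\tfrac{4}{c}B$, and the $\tfrac{1}{c}\delta[j,a+2]$ term produced by the simplification of $\delta[j,-2j-4-a]$.
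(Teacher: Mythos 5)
Your proposal is correct, and its skeleton coincides with the paper's: the same split of $\delta[j+1,a]$ according to whether $1\in T$, the same application of the $F$-recursion at the extreme entries $1+a$ and $2j+3+a$ (producing the same constant $c$), and an identical evaluation of the sub-sum $A$ (the one involving $F[(T\setminus\{2j+3\})+a]$) by splitting on $2j+1\in T$ and invoking Lemmas \ref{jas} and \ref{j-a}. Where you diverge is the sub-sum $B$: the paper evaluates it directly, splitting on $3\in T$, shifting $T\mapsto T-2$, and applying Lemma \ref{jas}, which yields $B=\tfrac1{(a+2)^2}\delta[j,a+2]+\tfrac14\big(\delta[j,a+2]-\tfrac1{(a+4)^2}\delta[j-1,a+4]\big)$ in one pass; you instead exploit the involution $T\mapsto(2j+4)-T$ together with $F[x_1,\dots,x_n]=F[-x_n,\dots,-x_1]$ to identify $B$ with $A$ at $a\mapsto-2j-4-a$, which forces you to introduce the reflected terms $\delta[j,-2j-4-a]$ and $\delta[j-1,-2j-2-a]$ and then eliminate them by one more round of Lemmas \ref{jas} and \ref{j-a}; I checked this elimination and the resulting cancellation, and it does reproduce the paper's value of $B$ and hence the stated recursion. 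Your route buys a conceptual point — it makes explicit the $1\leftrightarrow 2j+3$ reflection symmetry (the same symmetry underlying Lemma \ref{j-a}) and lets you recycle the computation of $A$ rather than redo a parallel case analysis — at the cost of being slightly longer, since the paper's direct split on $3\in T$ avoids the detour through the reflected $\delta$-values altogether.
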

\begin{proof}

First, we observe that
$$
\delta[j+1,a]=\sum_{\substack{T\subseteq\{1,3,5,\dots,2j+3\},\\ 2j+3\in T}}(F[T+a]H[T,2,2j+2,a])
$$ 
can be separated into two parts according to whether $1\in T$ or $1 \notin T$ as

$$
\sum_{\substack{T\subseteq\{3,5,\dots,2j+3\},\\ 2j+3\in T}}(F[T+a]H[T,2,2j+2,a])+\sum_{\substack{T\subseteq\{1,3,5,\dots,2j+3\},\\ 1,2j+3\in T}}(F[T+a]H[T,2,2j+2,a]).
$$
By shifting, the first part equals
\[\begin{split}
&\sum_{\substack{T\subseteq\{1,3,\dots,2j+1\},\\ 2j+1\in T}}(F[T+a+2]H[T,0,2j,a+2])\\
&=\frac{1}{(a+2)^2}\sum_{\substack{T\subseteq\{1,3,\dots,2j+1\},\\ 2j+1\in T}}(F[T+2+a]H[T,2,2j,2+a])=\frac{1}{(a+2)^2}\delta[j,a+2].
\end{split}\]
Then we write the second part by the recursive relation of $F$ to get
\begin{equation}\label{a}
\sum_{\substack{T\subseteq\{1,3,5,\dots,2j+3\},\\1,2j+3\in T}}\big((F[\{T-\{2j+3\}\}+a]+F[\{T-\{1\}\}+a])
H[T,2,2j+2,a]\big)(\frac{4}{c}),
\end{equation}
where $c=(a+1)^2+(a+2j+3)^2+6j+6$.
We separate the second part of \eqref{a}
\[(\frac{4}{c})\sum_{\substack{T\subseteq\{1,3,5,\dots,2j+3\},\\1,2j+3\in T}}F[\{T-\{1\}\}+a]H[T,2,2j+2,a]\]
into two parts according to whether $3\in T$ or $3 \notin T$ as
\begin{equation*}
\begin{split}
&(\frac{4}{c})\sum_{\substack{T\subseteq\{1,3,5,\dots,2j+3\},\\1,2j+3\in T,3\in T}}F[\{T-\{1\}\}+a]H[T,2,2j+2,a]\\
&+(\frac{4}{c})\sum_{\substack{T\subseteq\{1,3,5,\dots,2j+3\},\\1,2j+3\in T, 3\notin T}}F[\{T-\{1\}\}+a]H[T,2,2j+2,a]\\
&=(\frac{4}{c})(\frac{1}{4}+\frac{1}{(a+2)^2})\sum_{\substack{T\subseteq\{3,5,\dots,2j+3\},\\2j+3\in T,3\in T}}F[T+a]H[T,4,2j+2,a]\\
&+(\frac{4}{c})(\frac{1}{(a+2)^2})\sum_{\substack{T\subseteq\{3,5,\dots,2j+3\},\\2j+3\in T, 3\notin T}}F[T+a]H[T,4,2j+2,a],
\end{split}
\end{equation*}
thus, the second part of formula \eqref{a} equals


\[\begin{split}
&(\frac{4}{c})(\frac{1}{(a+2)^2})\sum_{\substack{T\subseteq\{3,5,\dots,2j+3\},\\2j+3\in T}}F[T+a]H[T,4,2j+2,a]\\
&+(\frac{4}{c})\frac{1}{4}\sum_{\substack{T\subseteq\{3,5,\dots,2j+3\},\\2j+3\in T,3\in T}}F[T+a]H[T,4,2j+2,a]
\end{split}\]
\begin{equation}\label{u1}
=(\frac{4}{c})\big(\frac{1}{(a+2)^2}\delta[j,a+2]+\frac{1}{4}(\delta[j,a+2]-\frac{1}{(a+4)^2}\delta[j-1,a+4])\big).
\end{equation}
The last step is obtained by Lemma \ref{jas} and shifting.

\medskip

Let's consider the first part of formula \eqref{a}. We separate
\begin{equation*}
(\frac{4}{c})\sum_{\substack{T\subseteq\{1,3,5,\dots,2j+3\},\\1,2j+3\in T}}F[\{T-\{2j+3\}\}+a]H[T,2,2j+2,a]
\end{equation*}
into two parts according to whether $2j+1 \in T$ or $2j+1 \notin T$ and use the same process as above to get

$$(\frac{4}{c})(\frac{1}{(a+2j+2)^2})\sum_{\substack{T\subseteq\{1,3,\dots,2j+1\},\\1\in T}}F[T+a]H[T,2,2j,a]
+(\frac{4}{c})\frac{1}{4}\sum_{\substack{T\subseteq\{1,3,\dots,2j+1\},\\1\in T,2j+1\in T}}F[T+a]H[T,2,2j,a]$$

\begin{equation}\label{u2}
=(\frac{4}{c})\big(\frac{1}{(a+2j+2)^2}\delta[j,-a-2-2j]
+\frac{1}{4}(\delta[j,a]-\frac{1}{(a+2)^2}\delta[j-1,a+2])\big).
\end{equation}
The last step is obtained by Lemma \ref{jas} and Lemma \ref{j-a}.

\medskip

Combining formula \eqref{u1} and formula \eqref{u2}, we have
\begin{align*}
\delta[j+1,a]=&\frac{1}{(a+2)^2}\delta[j,a+2]+(\frac{4}{c})\Big(\frac{1}{(a+2)^2}\delta[j,a+2]
+\frac{1}{4}(\delta[j,a+2]
\\
&
-\frac{1}{(a+4)^2}\delta[j-1,a+4])
+\frac{1}{4}(\delta[j,a]-\frac{1}{(a+2)^2}\delta[j-1,a+2])
\\
&
+\frac{1}{(a+2j+2)^2}\delta[j,-a-2-2j]\Big),
\end{align*}
which implies the statement of the proposition.
\end{proof}
\bigskip
We can use Proposition \ref{recursive} to find an explicit formula for $\delta[j,a]$.
\begin{corollary}\label{deltarecu}
\begin{equation}\label{delta}
\delta[j,a]=\frac{a\sum_{i=0}^{j}\frac{1}{(a+2i)(a+1+2i)}}{\prod_{i=1}^{2j+1}(a+i)}
\end{equation}
\end{corollary}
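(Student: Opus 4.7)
The plan is to prove the closed form by induction on $j$, using Proposition \ref{recursive} to drive the inductive step. For the base cases $j=0,1,2$ (the recursion only produces $\delta[j+1,a]$ for $j\ge 2$, so we need three base cases), I would unpack the definition of $\delta$ directly. For $j=0$ the only subset is $T=\{1\}$, the $H$-product is empty, and $\delta[0,a]=F[1+a]=\frac{1}{(a+1)^2}$, matching the formula. For $j=1$ the two subsets $T=\{3\}$ and $T=\{1,3\}$ contribute $\frac{1}{(a+2)^2(a+3)^2}$ and $\frac{a^2+4a+5}{(a+1)^2(a+2)^2(a+3)^2}$ respectively, summing to $\frac{2(a^2+3a+3)}{(a+1)^2(a+2)^2(a+3)^2}$, which agrees with the closed form; the case $j=2$ is analogous but with four subsets.

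The crucial simplification for the inductive step is to telescope the sum in the closed form via the partial-fraction identity
\[
\sum_{i=0}^{j}\frac{1}{(a+2i)(a+2i+1)}=\sum_{k=0}^{2j+1}\frac{(-1)^k}{a+k},
\]
so that $\delta[j,a]=\frac{a}{\prod_{k=1}^{2j+1}(a+k)}\sum_{k=0}^{2j+1}\frac{(-1)^k}{a+k}$. This alternating-sum form behaves cleanly under the shifts $a\mapsto a+2, a+4$ and, most importantly, under the reflection $a\mapsto -a-2j-2$ appearing in \eqref{recurformu}. Re-indexing $k\mapsto 2j+1-k$ in both the numerator sum and the denominator product (the latter picks up a sign $(-1)^{2j+1}=-1$) yields the identity
\[
\delta[j,-a-2j-2]=-\frac{a+2j+2}{a}\,\delta[j,a]+\frac{a+2j+2}{a\prod_{k=1}^{2j+1}(a+k)}-\frac{1}{\prod_{k=1}^{2j+1}(a+k)},
\]
which expresses the reflected value as a rational combination of $\delta[j,a]$ and a short list of explicit rational correction terms.

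Substituting this reflection identity, together with the inductive formulas for $\delta[j,a+2]$, $\delta[j-1,a+2]$ and $\delta[j-1,a+4]$, into the right-hand side of \eqref{recurformu} reduces the inductive step to a rational identity in $a$. The main obstacle is the coefficient $c=(a+1)^2+(a+2j+3)^2+6j+6$, which is an irreducible quadratic in $a$ that does not factor over any of the linear denominators $(a+k)$. The key point is therefore to show that the four $\frac{1}{c}$-terms in \eqref{recurformu} — specifically, the direct $\frac{1}{c}\delta[j,a]$ and $\frac{1}{c}\delta[j,a+2]$ pieces, the $\frac{1}{c(a+4)^2}\delta[j-1,a+4]$ and $\frac{1}{c(a+2)^2}\delta[j-1,a+2]$ pieces, and the extra $-\frac{a+2j+2}{ac(a+2j+2)^2}\delta[j,a]$ piece produced by the reflection — combine so that the factor of $c$ cancels completely. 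Once the $c$-dependence disappears, both sides become rational functions with common denominator $\prod_{k=1}^{2j+3}(a+k)^2$, and the remaining equality is a polynomial identity of bounded degree, which can be finished by matching residues at the double poles $a=-1,\dots,-(2j+3)$ (or, equivalently, checking sufficiently many numerical values of $a$). This final algebraic cancellation — proving the $c$-free collapse — is the delicate bookkeeping step that constitutes the bulk of the work.
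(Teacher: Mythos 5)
Your overall strategy is the same as the paper's: induct on $j$, with the inductive step consisting of verifying that the closed form \eqref{delta} satisfies the recursion of Proposition \ref{recursive}. Your base-case computations are correct (the case $j=0$ is superfluous and not even covered by the definition of $\delta$; $j=1,2$ suffice, since the recursion first produces $\delta[3,a]$), the alternating telescoping $\sum_{i=0}^{j}\frac{1}{(a+2i)(a+2i+1)}=\sum_{k=0}^{2j+1}\frac{(-1)^k}{a+k}$ is correct, and your reflection identity for $\delta[j,-a-2j-2]$ is correct; it is the analogue of the relation $\phi'[j,-a-2-2j]=\phi'[j,a+1]$ that the paper uses.

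The gap is that the decisive step is asserted rather than proved: you claim (and explicitly defer as ``the bulk of the work'') that after substituting the inductive hypotheses and the reflection identity into \eqref{recurformu}, the $\frac{1}{c}$-terms combine so that $c$ cancels and the remaining identity holds. Nothing in the proposal establishes this, and the suggested fallbacks do not close it: checking ``sufficiently many numerical values of $a$'' would only verify the identity for one fixed $j$ at a time, whereas the statement is needed for all $j$ and the degree and coefficients depend on $j$; matching residues at $a=-1,\dots,-(2j+3)$ would likewise have to be carried out symbolically in $j$, which is precisely the omitted computation. The paper sidesteps this bookkeeping by first dividing out the common skeleton, passing to $\phi[j,a]=\frac{1}{a}\delta[j,a]\prod_{i=1}^{2j+1}(a+i)$ so that \eqref{recurformu} becomes \eqref{phirecur}, and then expressing all five quantities $\phi'[j+1,a]$, $\phi'[j,a]$, $\phi'[j-1,a+2]$, $\phi'[j-1,a+4]$, $\phi'[j,-a-2-2j]$ in terms of the single symbol $\phi'[j,a+2]$ plus explicit elementary fractions; the identity is then linear in that one symbol, both sides collapse to $a\phi'[j,a+2]+\frac{1}{a+1}$, and the check is finite and uniform in $j$ (in particular the cancellation of $c$ happens automatically in this reduction). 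Your telescoped form could be pushed through the same way, with the unknown being $\sum_{k=0}^{2j+1}\frac{(-1)^k}{a+2+k}$, but until that linear-in-one-symbol reduction and the resulting coefficient check are actually carried out, the inductive step of your argument is not established.
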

\begin{proof}
We  use induction on $j$.

\smallskip
Base. The cases $j=1$ and $j=2$ are checked by calculating $\delta[j,a]$ directly.

\smallskip
Induction step. It suffices to check that the right hand side of equation \eqref{delta} satisfies the recursion of Proposition \ref{recursive}. Equivalently, it suffices to show that $\phi'[j,a]=\sum_{i=0}^{j}\frac{1}{(a+2i)(a+1+2i)}$ satisfies the recursion
\begin{equation}\label{phirecur}
\begin{split}
a\phi[j+1,a]=&\big((1+\frac{4}{c})\frac{1}{(a+2)^2}+\frac{1}{c}\big)(a+1)(a+2)^2\phi[j,a+2]
\\&
-\frac{1}{c}(a+1)(a+2)(a+3)\phi[j-1,a+4]
\\&
+\frac{1}{c}a(a+2j+2)(a+2j+3)\phi[j,a]
\\&
-\frac{1}{c}(a+1)(a+2j+2)(a+2j+3)\phi[j-1,a+2]
\\
&+\frac{4}{c}(a+2j+3)\phi[j,-a-2j-2]
\end{split}
\end{equation} obtained by passing from $\delta[j,a]$ to $\phi[j,a]=\frac{1}{a}\delta[j,a]\prod_{i=1}^{2j+1}(a+i)$.
\medskip

We observe that
\[\begin{split}
&\phi'[j-1,a+4]=\phi'[j,a+2]-\frac 1{(a+2)(a+3)},\\
& \phi'[j,a]=\phi'[j,a+2]+\frac{1}{a(a+1)}-\frac{1}{(a+2j+2)(a+2j+3)},\\
& \phi'[j-1,a+2]=\phi'[j,a+2]-\frac{1}{(a+2j+2)(a+2j+3)},\\
& \phi'[j,-a-2-2j]=\phi'[j,a+1]=-\phi'[j,a+2]+\frac{1}{a+1}-\frac{1}{a+2j+3},\\
& \phi'[j+1,a]=\phi'[j,a+2]+\frac{1}{a(a+1)}
\end{split}\]
directly from the definition of $\phi'[j,a]$.
Then we see that both sides of the equation \eqref{phirecur} for $\phi'$ equal $a\phi'[j,a+2]+\frac{1}{(a+1)}$, which proves the induction step.
\end{proof}
Now we are ready to prove Proposition \ref{gammaa}.
\begin{proof}[Proof of Proposition \ref{gammaa}]
By  Corollary \ref{deltarecu}, we see $\chi_{2j+1}=(2j+1)!\delta[j,0]=(2j+1)!\frac{1}{(2j+1)!}=1$ for all j.
Then by induction on $j$ we see that $\chi_{2j}=2$ in view of Lemma \ref{evenodd}.
\end{proof}

\medskip
\section{Relation with Euler characteristics.}\label{same}
In this section, we prove our main Theorem \ref{main}, which exhibits the relation between the Clifford-stringy Euler characteristics and the Euler characteristics of the complete intersection of quadrics.
As in Section \ref{Doub}, let $V$ be a vector space of dimension $2n$.
\begin{lemma}\label{Qt}
Let $Q_t$ be a quadric in $\mathbb{P}V$ of rank $2n-t$. Then we have
\begin{equation*}
\chi(Q_t)=\begin{cases}
     2n-1, & \text{if $t$  is odd};\\
   2n, & \text{if $t$ is even}.
  \end{cases}
\end{equation*}
\end{lemma}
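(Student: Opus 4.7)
The plan is to reduce to the case of smooth quadrics by exploiting the cone structure. Since $Q_t$ has rank $2n-t$, its radical $R \subset V$ has dimension $t$, so the singular locus of $Q_t$ is the linear subspace $\mathbb{P}R \cong \mathbb{P}^{t-1}$. Choosing any splitting $V = R \oplus V'$ with $\dim V' = 2n-t$, the quadratic form descends to a non-degenerate form on $V'$, cutting out a smooth quadric $Q_{\mathrm{sm}} \subset \mathbb{P}V' \cong \mathbb{P}^{2n-t-1}$ of dimension $2n-t-2$. Then $Q_t$ is exactly the cone with vertex $\mathbb{P}R$ over $Q_{\mathrm{sm}}$.

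Next I would stratify $Q_t = \mathbb{P}R \sqcup (Q_t \setminus \mathbb{P}R)$. Projection away from the vertex $\mathbb{P}R$ defines a morphism $Q_t \setminus \mathbb{P}R \to Q_{\mathrm{sm}}$ whose fibers are the affine spaces $\mathbb{P}^t \setminus \mathbb{P}^{t-1} \cong \mathbb{A}^t$ joining points of $Q_{\mathrm{sm}}$ to the vertex. Since this is a Zariski locally trivial $\mathbb{A}^t$-bundle, multiplicativity of compactly supported Euler characteristics yields $\chi(Q_t \setminus \mathbb{P}R) = \chi(\mathbb{A}^t)\chi(Q_{\mathrm{sm}}) = \chi(Q_{\mathrm{sm}})$, so
\[
\chi(Q_t) = \chi(\mathbb{P}^{t-1}) + \chi(Q_{\mathrm{sm}}) = t + \chi(Q_{\mathrm{sm}}).
\]

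Finally, I would invoke the classical Euler characteristic of a smooth quadric hypersurface of dimension $d$: it equals $d+2$ when $d$ is even (two rulings contribute in middle cohomology) and $d+1$ when $d$ is odd (cohomology agrees with that of $\mathbb{P}^d$). Here $d = 2n-t-2$, so when $t$ is odd $d$ is odd and $\chi(Q_{\mathrm{sm}}) = 2n-t-1$, giving $\chi(Q_t) = 2n-1$; when $t$ is even $d$ is even and $\chi(Q_{\mathrm{sm}}) = 2n-t$, giving $\chi(Q_t) = 2n$.

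There is no serious obstacle: the only non-trivial input is the known formula for $\chi$ of a smooth quadric, which follows from the Lefschetz hyperplane theorem together with the standard computation of $H^{n,n}$ by the two families of maximal isotropic subspaces in the even-dimensional case. The cone-and-affine-bundle argument is elementary once the splitting $V = R \oplus V'$ is fixed.
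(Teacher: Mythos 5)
Your proof is correct and follows essentially the same route as the paper: both exhibit $Q_t$ as a cone over a smooth quadric of dimension $2n-t-2$ with vertex $\mathbb{P}\mathrm{Rad}(Q_t)\cong\mathbb{P}^{t-1}$, use the affine-fiber bundle structure of the complement of the vertex to get $\chi(Q_t)=t+\chi(Q_{\mathrm{sm}})$, and conclude with the classical Euler characteristic of a smooth quadric. The parity bookkeeping matches the paper's computation exactly.
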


\begin{proof}
Let $Q$ be a smooth quadric in $\Pp^r$. It is known that
\begin{equation}\label{chiQ}
  \chi(Q)=\begin{cases}
    \dim Q + 1, & \text{if $\dim Q$  is odd};\\
  \dim Q + 2, & \text{if $\dim Q$ is even}.
  \end{cases}
\end{equation}
If $t>0$, we know that $Q_t$ is a cone from the projective space $\mathbb{P}Rad(Q_t)$ to a smooth quadric $Q'_t$ of dimension $2n-2-t$, see \cite{Add}.
 So $Q_t\backslash \mathbb{P}Rad(Q_t)$ is a fiber bundle over the smooth quadric $Q'_t$ of dimension $2n-2-t$ with fibers $\mathbb{C}^t$. Thus by equation \eqref{chiQ}, we have
 \begin{equation*}
\chi(Q_t)=\chi(\mathbb{P}Rad(Q_t))+\chi({Q_t}')
  =t+(2n-2-t)+\begin{cases}
     1, & \text{if $2n-2-t$  is odd};\\
   2, & \text{if $2n-2-t$ is even},
  \end{cases}\\
 \end{equation*}
which implies the statement of the lemma.

\end{proof}
As before, let
$W$ be a generic subspace of $\Sym^2V^*$ of dimension $n$ and $\sigma: Z\rightarrow \mathbb{P}W$ be the double cover ramified over $\phi_1$. 
We have the following theorem.

\begin{theorem}\label{main}
Let $Y=\{v\in V| q(v)=0$ for all $q\in \mathbb{P}W\}$ be the complete intersection associated to $W$ in $\mathbb{P}V \cong \mathbb{P}^{2n-1}$. Then we have $$\chi_{cst}(Z)=\chi(Y).$$
\end{theorem}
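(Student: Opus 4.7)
The plan is to compute both $\chi_{cst}(Z)$ and $\chi(Y)$ in terms of the corank stratification of $\mathbb{P}W$ and to observe that the two resulting closed formulas coincide. Set $\phi_t^\circ := \phi_t\setminus\phi_{t+1}$, the locus of quadrics of corank exactly $t$; by genericity $\phi_{k+1}=\emptyset$, so $\mathbb{P}W=\bigsqcup_{t=0}^{k}\phi_t^\circ$. The heavy lifting on the Clifford-stringy side has already been done in Section \ref{sec.hard}; on the complete-intersection side the ingredient is a standard incidence-variety argument feeding off Lemma \ref{Qt}.

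For the Clifford-stringy side, I would apply Remark \ref{cstofZ} to write $\chi_{cst}(Z)=\sum_{t=0}^{k}\chi_t\,\chi(\phi_{t,Z}^\circ)$, where $\chi_t$ is the local contribution $S_{cst}(y)$ at $y\in\phi_{t,Z}^\circ$. For $t=0$ the point is smooth and lies off every exceptional divisor, so $\chi_0=1$; for $t\geq 1$ Proposition \ref{gammaa} gives $\chi_t=1$ for odd $t$ and $\chi_t=2$ for even $t$. Because $\sigma$ is \'etale over $\phi_0^\circ$ and a bijection on closed points over each $\phi_t^\circ$ with $t\geq 1$ (it is ramified over $\phi_1\supseteq \phi_t$), we have $\chi(\phi_{0,Z}^\circ)=2\chi(\phi_0^\circ)$ and $\chi(\phi_{t,Z}^\circ)=\chi(\phi_t^\circ)$ for $t\geq 1$. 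Substituting these, using $\chi(\mathbb{P}W)=n$ to eliminate $\chi(\phi_0^\circ)$, and collecting terms gives
$$\chi_{cst}(Z)=2n-\sum_{\substack{1\le t\le k\\ t\text{ odd}}}\chi(\phi_t^\circ).$$

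For $\chi(Y)$, I would introduce the incidence variety $\mathcal{Q}=\{(v,q)\in\mathbb{P}V\times\mathbb{P}W\mid q(v)=0\}$ and compute $\chi(\mathcal{Q})$ by each of its two projections. The projection to $\mathbb{P}V$ has fiber a hyperplane $\mathbb{P}^{n-2}\subset\mathbb{P}W$ over $v\notin Y$ and all of $\mathbb{P}W$ over $v\in Y$, so $\chi(\mathcal{Q})=(n-1)(2n-\chi(Y))+n\chi(Y)=2n(n-1)+\chi(Y)$. The projection to $\mathbb{P}W$ has fiber the quadric $Q_q\subset\mathbb{P}V$ over $q\in\phi_t^\circ$, and by Lemma \ref{Qt} $\chi(Q_q)=2n-1$ for odd $t$ while $\chi(Q_q)=2n$ for even $t$ (including the smooth case $t=0$). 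Equating the two expressions and again eliminating $\chi(\phi_0^\circ)$ yields
$$\chi(Y)=2n-\sum_{\substack{1\le t\le k\\ t\text{ odd}}}\chi(\phi_t^\circ),$$
identical to the formula for $\chi_{cst}(Z)$.

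I do not anticipate a genuine obstacle; the theorem reduces to a bookkeeping comparison once Proposition \ref{gammaa} and Lemma \ref{Qt} are in hand. The one detail requiring care is the stratum $t=1$: although $\sigma$ is ramified along $\phi_1^\circ$, this locus is disjoint from the singular locus of $Z$ (which lies in $\sigma^{-1}(\phi_2)$), so $\omega$ restricts to an isomorphism over $\phi_{1,Z}^\circ$, and the local contribution there is $1/(1+a_1)=1$ (since $a_1=0$), consistent with $\chi_1=F[\{1\}]=1$ from Proposition \ref{gammaa}. With this reconciled, the two stratification sums match term by term.
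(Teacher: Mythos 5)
Your proposal is correct and follows essentially the same route as the paper: both compute the Euler characteristic of the universal quadric incidence variety via its two projections (using Lemma \ref{Qt}) and compare with $\chi_{cst}(Z)$ obtained from the corank stratification, the double-cover relations over $\phi_t^\circ$, and the local contributions $\chi_t$ from Proposition \ref{gammaa}. The only difference is cosmetic: you eliminate $\chi(\phi_0^\circ)$ via $\chi(\mathbb{P}W)=n$ to put both sides in the form $2n-\sum_{t\ \mathrm{odd}}\chi(\phi_t^\circ)$, while the paper equates the two stratification sums directly.
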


\begin{proof}
Consider the universal quadric $\mathbb{H}=\{(q,v)\in \mathbb{P}W\times \mathbb{P}V|q(v)=0\}.$ We will calculate its Euler characteristics in two ways.

\smallskip
First, we consider the projection $\eta:\mathbb{H}\rightarrow \mathbb{P}V$ such that $\eta((q,v))=v$ for $(q,v)\in\mathbb{H}$.
When $v \in Y$, $\eta^{-1}(v)\cong\mathbb{P}W\cong\mathbb{P}^{n-1}$. When $v \in \mathbb{P}V-Y$, $\eta^{-1}(v)\cong\{\mathbb{C}q \in \mathbb{P}W | q(v)=0\}$ which is a hyperplane in $\mathbb{P}W$. So $\eta^{-1}(v)\cong \mathbb{P}^{n-2}$. Thus we have
\begin{equation}\label{Hpv}
\begin{split}
&\chi(\mathbb{H})=\chi(\mathbb{P}^{n-1})\chi(Y)+\chi(\mathbb{P}^{n-2})\chi(\mathbb{P}V-Y)\\
&=n\chi(Y)+(n-1)\chi(\mathbb{P}^{2n-1})-(n-1)\chi(Y)=2n(n-1)+\chi(Y).
\end{split}
\end{equation}

\smallskip
Second, we consider the projection $\eta':\mathbb{H}\rightarrow \mathbb{P}V$ such that $\eta'((q,v))=q$ for $(q,v)\in\mathbb{H}$.
In section \ref{Doub}, we have defined $\phi_t$, $t=1,2,\ldots,k$. Let $\phi_0=\mathbb{P}W$. Denote $\phi^{\circ}_t=\phi_t-\phi_{t+1}$, $0\leq t\leq k$
and $\phi^{\circ}_k=\{q\in\mathbb{P}W| \corank\,(q)=k \}$. Then we have $\mathbb{P}W=\bigsqcup_{t=0,1,\ldots,k}\phi^{\circ}_t$ and $\chi(\mathbb{P}W)=\sum_{t=0,1\ldots,k}\chi(\phi^{\circ}_t)$ When $q\in \phi^{\circ}_t$,
$\eta'(q)\cong \{v\in \mathbb{P}V | q(v)=0\}$ which we denote by $Q_t$. We know $Q_t$ is a quadric in $\mathbb{P}V$ with rank $2n-t$.
 Thus, by Lemma \ref{Qt}, we know that $\chi(Q_t)=2n$ when $t$ is even and $\chi(Q_t)=2n-1$ when $t$ is odd, and we have
\[
\begin{split}
\chi(\mathbb{H})&=\sum_{0\leq t\leq k}\chi(Q_t)\chi(\phi^{\circ}_t)\\
&=\sum_{2|t,0\leq t\leq k}2n\chi(\phi^{\circ}_t)+\sum_{2\nmid t,0\leq t\leq k}(2n-1)\chi(\phi^{\circ}_t)\\
&=(2n-2)\sum_{0\leq t\leq k}\chi(\phi^{\circ}_t)+\sum_{2|t,0\leq t\leq k}2\chi(\phi^{\circ}_t)+\sum_{2\nmid t,0\leq t\leq k}\chi(\phi^{\circ}_t)\\
&=(2n-2)\chi(\mathbb{P}W)+\sum_{2|t,0\leq t\leq k}2\chi(\phi^{\circ}_t)+\sum_{2\nmid t,0\leq t\leq k}\chi(\phi^{\circ}_t)\\
&=(2n-2)n+\sum_{2|t,0\leq t\leq k}2\chi(\phi^{\circ}_t)+\sum_{2\nmid t,0\leq t\leq k}\chi(\phi^{\circ}_t).
\end{split}
\]
After comparing with equation \eqref{Hpv}, we have
\begin{equation}\label{almostthere}
\begin{split}
\chi(Y)&=\sum_{2|t,0\leq t\leq k}2\chi(\phi^{\circ}_t)+\sum_{2\nmid t,0\leq t\leq k}\chi(\phi^{\circ}_t)\\
&=2\chi(\phi^{\circ}_0)+\sum_{2|t,2\leq t\leq k}2\chi(\phi^{\circ}_t)+\sum_{2\nmid t,0\leq t\leq k}\chi(\phi^{\circ}_t).
\end{split}
\end{equation}
Let $\phi_{t,Z}$ and $\phi^{\circ}_{t,Z}$ be the preimages of  $\phi_{t,Z}$ and $\phi^{\circ}_{t,Z}$ respectively for $t=0,1,\ldots,k$. Since $\sigma:Z \rightarrow \mathbb{P}W$ is a double cover ramified at $\phi_{1Z}$,
we have $\chi(\phi^{\circ}_{0Z})=2\chi(\phi^{\circ}_0)$ and $\chi(\phi^{\circ}_{t,Z})=\chi(\phi^{\circ}_Z)$, $t=1,2,\ldots,k$.
By Remark \ref{cstofZ},  we have 
$$\chi_{cst}(Z) = \sum_{0\leq t \leq k} \chi(\phi^{\circ}_{t,Z}) S_{cst}(y\in \phi^{\circ}_{t,Z}).
$$
By Proposition \ref{gammaa} we know that 
\begin{equation}\label{gammab}
 S_{cst}(y\in \phi^{\circ}_{t,Z})= \chi_t=\begin{cases}
    1, & \text{if $1\leq t\leq k, t\text{ is odd}$};\\
    2, & \text{if $1\leq t\leq k, t\text{ is even}$}.
  \end{cases}
\end{equation}
Since $Z$ is smooth along $\phi_{0Z}$, we see that $S_{cst}(y\in \phi^{\circ}_{0Z})=1$.
Thus, we obtain
\[\begin{split}
\chi_{cst}(Z) & = \chi(\phi^{\circ}_{0Z})+\sum_{2|t,2\leq t\leq k}2\chi(\phi^{\circ}_{t,Z})+\sum_{2\nmid t,0\leq t\leq k}\chi(\phi^{\circ}_{t,Z})\\
&=2\chi(\phi^{\circ}_0)+\sum_{2|t,2\leq t\leq k}2\chi(\phi^{\circ}_t)+\sum_{2\nmid t,0\leq t\leq k}\chi(\phi^{\circ}_t).
\end{split}\]
It remains to use equation \eqref{almostthere} to get $\chi_{cst}(Z)=\chi(Y).$
\end{proof}

\section{Comments and open questions.}\label{open}
In this section we make several comments and highlight some natural open questions which will hopefully be soon addressed by us or other researchers.

\medskip
Let $\widetilde{X}$ be the blowup of $X$ with center $Y$, where $Y$ is a locally complete intersection subscheme of codimenion $c$. Theorem 1.6 in \cite{ku2} shows that the numbers of components in the semiorthogonal decomposition of $D^b(coh(\widetilde{X}))$ is related to the discrepancy $c-1$ of the blowup.
Thus we get the inspiration that the meaning of discrepancy changes we made to define Clifford-stringy  Euler characteristics are expected to be interpreted in terms of the Lefschetz decompositions of derived categories. However, we do not at the moment understand this mechanism.

\medskip

The main theorem of this paper can be conjecturally generalized to a statement on Hodge polynomials. That is to say it is natural to construct some kind of Hodge polynomial of the non-commutative Clifford algebra $(\mathbb{P}(L),\mathcal{B}_0)$ which are predicted to be the same as the Hodge polynomial of the complete intersection $X_L$. Also, we could extend the statement to elliptic genera. However, it is really strange that we need to change the discrepancies, since this breaks the Calabi-Yau condition which is typically needed for modularity. 
This indicates that perhaps there are additional subtleties that come up when one tries to define elliptic genus for sigma models with non-commutative targets.

\medskip

It is natural to try to extend our calculations to the case of Clifford double mirrors of complete intersections in toric varieties, described in \cite{LBoZL}. Specifically, under some centrality and flatness assumptions, Theorem 6.3 of \cite{LBoZL} shows that a complete intersection $\Xx$ and a Clifford
non-commutative variety $\Yy$  given by different decompositions of the
degree element $\deg^\vee$ of a reflexive Gorenstein cone $K^\vee$
and the appropriate regular simplicial fans in $K^\vee$ have equivalent 
bounded derived categories. 
It is natural to conjecture that the Clifford-stringy Euler characteristics of $\Yy$ and the Euler characteristics of complete intersections $\Xx$ in toric varieties are equal. One would need to work with DM stacks, and to modify the definitions accordingly.

\section{Appendix: resolution of $z^2=x_1x_2 \cdots x_n$.}\label{res}

We describe in detail a resolution of the affine toric variety given by $z^2=x_1x_2 \cdots x_n$ for any $n$. We call this variety $\widehat{Z}_{toric}$.
The corresponding cone $\sigma$ in the cocharacter lattice $N$ is described as follows:
$$N=\{(a_1,\dots,a_n)\in \mathbb Z^l| \sum_{i=1}^l a_i\equiv 0{\rm (mod 2)}\}$$
the cone $\sigma$ is generated by the lattice elements $e_i=(0,\dots,0,2,0,\dots,0)\in N$ whose $i-$th position is $2$, where $i=1,\dots,l$.
The ray $\mathbb{R}_{\geq 0}e_i$ corresponds to a divisor which we denote by $E_{\text{2i-1,toric}}$.
The resolution of the singularities of $\widehat{Z}_{toric}$ is given by the subdivision of the cone $\sigma$. Any subdivision can be realized by a triangulation of the $(l-1)$-simplex spanned by $e_1,\dots,e_l$. We will give such a triangulation below.

\medskip

For a subset $\{i_1,\dots, i_\alpha\}\subseteq \{1,2,\dots,l\}$, $i_1<i_2<\dots <i_\alpha$, we denote the simplex spanned by $\{e_{i_1},\dots, e_{i_\alpha}\}$ to be $S(i_1,\dots, i_\alpha)$. We will give the triangulation $\widetilde{S}(i_1,\dots, i_\alpha)$ of $S(i_1,\dots, i_\alpha)$ inductively.
When $\alpha=2$, $S(i_1,i_2)$ is a segment, the triangulation $\widetilde{S}(e_1,e_2)$ is given by adding a vertex $e_{i_1,i_2}=\frac{e_{i_1}+e_{i_2}}{2}$.
Suppose we have given the triangulations of all simplices for $\alpha\le \beta$. Then for $\alpha=\beta+1$,
\begin{itemize}
\item first, we triangulate the simplices $S(i_1,\dots, i_\beta)$ and $S(i_2,\dots, i_{\beta+1})$;
\item second, we add a vertex $e_{i_1,i_{\beta+1}}=\frac{e_{i_1}+e_{i_{\beta+1}}}{2}$;
\item third, we add all the simplices generated by the simplices of the triangulation $\widetilde{S}(i_1,\dots, i_\beta)$ and the added vertex $e_{i_1,i_{\beta+1}}$, i.e. $Conv(\tau, e_{i_1,i_{\beta+1}})$ for all $\tau$ in the triangulation of $S(i_1,\dots, i_\beta)$.
    We also add all the simplics generated by the simplices of the triangulation $\widetilde{S}(i_2,\dots, i_{\beta+1})$ and the added vertex $e_{i_1,i_{\beta+1}}$,  i.e. $Conv(\tau, e_{i_1,i_{\beta+1}})$ for all $\tau$ in the triangulation of $S(i_2,\dots, i_{\beta+1})$.
\end{itemize}
This gives a triangulation of $S(i_1,\dots, i_\alpha)$. The subdivision of $\sigma$ is the fan over the triangulation $\widetilde{S}(1,2,3,\dots,l)$, which we denote by $\Sigma$.

\medskip

We can also describe the subdivision of $\sigma$ in another way.
Consider $l$ points on the real line  $1,2,3,\dots,l$. Let $P$ be the partially ordered set consisting of all closed segments with two endpoints among the $l$ points $1,2,3,\dots,l$, including zero length segments. We have a one-to-one correspondence between $P$ and the generators of rays of $\Sigma$.
\[\begin{split}
& P \overset{1:1}{\longleftrightarrow} \Sigma(1)=\{\text{generators of rays of }\Sigma\} \\
& [s,t] \longmapsto e_{[s,t]}=\frac{e_s+e_t}{2}
\end{split}\]
When $s=t$, we have $e_{s,s}=e_s$.
\begin{definition}
A subset $\Gamma\subseteq P$ is called a nested collection of segments if for any $\alpha, \beta \in \Gamma$, either $\alpha\subseteq\beta$ or $\beta\subseteq\alpha$. We denote the elements of $\Gamma$ in the decreasing order by $\Gamma=\{[s_\lambda,t_\lambda]|1\le s_\lambda\le t_\lambda\le l, \lambda=1,2,\dots,l'\}$, where $l'= \# \Gamma$. For all $\lambda< \nu$, we have $s_\lambda\leq s_\nu$ and $t_\lambda \geq s_\nu$.
\end{definition}
Then we have the following proposition.
\begin{proposition}
A subset of $P$ is a nested collection of segment if and only if the corresponding generators of rays of $\Sigma$ span a cone in $\Sigma$.
\end{proposition}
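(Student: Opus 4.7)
The plan is to prove the stronger inductive statement: for every subset $\{i_1,\dots,i_\alpha\}\subseteq\{1,\dots,l\}$, the simplices (of all dimensions, including the empty one) of the triangulation $\widetilde S(i_1,\dots,i_\alpha)$ are in bijection with the nested collections $\Gamma$ of segments whose endpoints lie in $\{i_1,\dots,i_\alpha\}$, via the assignment $\Gamma\leftrightarrow\{e_{[s,t]}:[s,t]\in\Gamma\}$. Because the cones of $\Sigma$ are by construction the cones over the simplices of $\widetilde S(1,2,\dots,l)$, the proposition is the case $\alpha=l$ of this assertion. The base case $\alpha=1$ is immediate: $\widetilde S(i_1)$ has only the two simplices $\emptyset$ and $\{e_{i_1}\}$, corresponding to $\emptyset$ and $\{[i_1,i_1]\}$.

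For the inductive step I would split the simplices of $\widetilde S(i_1,\dots,i_\alpha)$ according to whether they contain the newly added vertex $e_{[i_1,i_\alpha]}$. A simplex $\tau$ \emph{not} containing $e_{[i_1,i_\alpha]}$ lies entirely in one of the sub-triangulations $\widetilde S(i_1,\dots,i_{\alpha-1})$ or $\widetilde S(i_2,\dots,i_\alpha)$, and by induction is encoded by a nested collection whose segments have endpoints in $\{i_1,\dots,i_{\alpha-1}\}$ or $\{i_2,\dots,i_\alpha\}$ respectively. These are exactly the nested collections of subsegments of $[i_1,i_\alpha]$ with endpoints in $\{i_1,\dots,i_\alpha\}$ that do \emph{not} contain $[i_1,i_\alpha]$ itself, because the largest segment of such a collection must omit either $i_1$ or $i_\alpha$ as an endpoint, depending on which side of the filtration is used. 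A simplex containing $e_{[i_1,i_\alpha]}$ has the form $\mathrm{Conv}(\tau',e_{[i_1,i_\alpha]})$ for some $\tau'$ in one of the two sub-triangulations; by induction $\tau'$ corresponds to a nested collection $\Gamma'$ of proper subsegments of $[i_1,i_\alpha]$, and adjoining $[i_1,i_\alpha]$ produces a nested collection $\Gamma'\cup\{[i_1,i_\alpha]\}$, since $[i_1,i_\alpha]$ contains every segment in $\Gamma'$. Conversely, every nested collection containing $[i_1,i_\alpha]$ arises this way.

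The one delicate point, and the main thing to verify carefully, is that the bijection is well defined at the overlap of the two sub-triangulations. A simplex $\tau$ that sits in the common face $\widetilde S(i_2,\dots,i_{\alpha-1})$ appears as the \emph{same} simplex in both $\widetilde S(i_1,\dots,i_{\alpha-1})$ and $\widetilde S(i_2,\dots,i_\alpha)$; combinatorially, such a $\tau$ is detected by a nested collection whose largest segment is already a subsegment of $[i_2,i_{\alpha-1}]$, and the two inductive descriptions assign the same $\Gamma$ to it, so no double counting occurs. I expect this bookkeeping, rather than any substantive difficulty, to be the main obstacle: one has to match the four pieces listed in the construction of $\widetilde S(i_1,\dots,i_\alpha)$ against the four flavors of nested collections (containing or not containing $[i_1,i_\alpha]$, and with the largest proper segment missing $i_1$ or $i_\alpha$), and check that the overlap is accounted for exactly once. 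Once this is done, assembling the two cases exhausts all nested collections with endpoints in $\{i_1,\dots,i_\alpha\}$, completing the induction and hence the proposition.
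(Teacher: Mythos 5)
Your induction on the recursive construction of $\widetilde S(i_1,\dots,i_\alpha)$ is correct, and since the paper leaves this proposition to the reader, your argument supplies exactly the natural proof it intends: split the simplices according to whether they contain the apex $e_{[i_1,i_\alpha]}$ and match each piece with the nested collections containing or not containing $[i_1,i_\alpha]$. The one point worth making fully explicit is that ``nested'' here means totally ordered by inclusion, so a collection not containing $[i_1,i_\alpha]$ has a unique maximal segment which omits $i_1$ or $i_\alpha$, forcing the whole collection (and likewise $\Gamma\setminus\{[i_1,i_\alpha]\}$ in the apex case) to have endpoints in $\{i_1,\dots,i_{\alpha-1}\}$ or in $\{i_2,\dots,i_\alpha\}$ — precisely the fact your inductive step uses, and which would fail for merely laminar families.
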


\begin{proof}
Left to the reader.
\end{proof}
\begin{remark}
If we regarded $e_{s_\lambda,t_\lambda}$ as a point in $\widetilde{S}(1,2,3,\dots,l)$, this proposition is equivalent to the statement that the set of points $\{e_{s_1,t_1},e_{s_2,t_2},\dots,e_{s_{l'},s_{l'}}\}$ corresponds to a simplex in $\widetilde{S}(1,2,3,\dots,l)$ if and only if $\Gamma$ is nested.
\end{remark}

In particular, a maximum dimensional cone in $\Sigma$ corresponds to a nested collection of segments $\Gamma=\{[s_\lambda,t_\lambda]|1\le s_\lambda\le t_\lambda\le l, \lambda=1,2,\dots,l\}$ such that the length of the segment $[s_\lambda,t_\lambda]$ is $l-\lambda$, i.e. the length of segments are $l-1, l-2, \ldots, 0$. Thus $\Gamma=\{[1,l], [s_2, t_2], \ldots, [s_l, t_l]\}$, where $t_l=s_l$ and $[s_{\lambda+1},t_{\lambda+1}]=[s_\lambda,t_\lambda-1] \text{ or } [s_\lambda+1,t_\lambda]$ for $\lambda=1,\dots, l-1.$

\begin{figure}[H]
  \centering
  \includegraphics[width=0.6\textwidth]{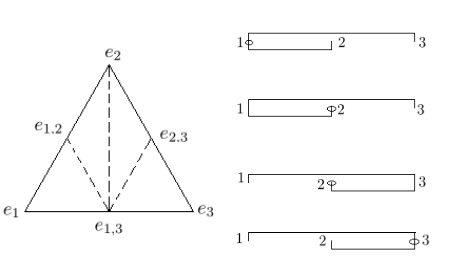}
  \caption{$\widetilde{S}(1,2,3)$, the triangulation of a triangle.}
  \label{fig:triangulation 1}
\end{figure}

\begin{figure}[H]
  \includegraphics[width=0.9\textwidth]{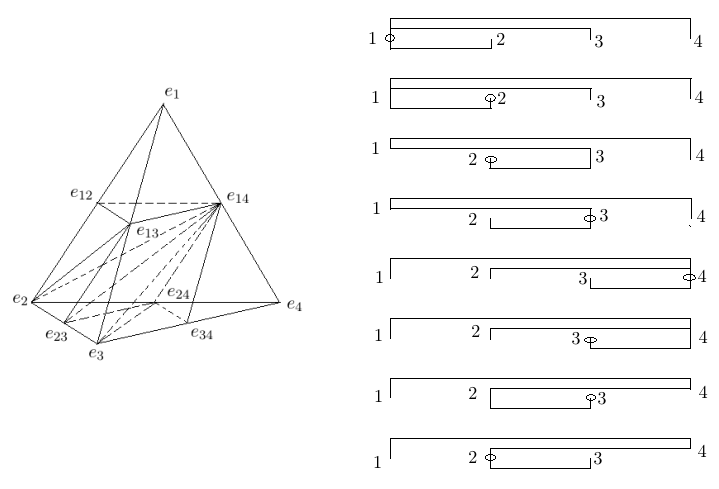}
  \caption{$\widetilde{S}(1,2,3,4)$, the triangulation of a tetrahedron.}
  \label{fig:triangulation 2}
\end{figure}
Please see Figure \ref{fig:triangulation 1} and Figure \ref{fig:triangulation 2} as examples for the triangulation of $S(1,2,3,\dots,l)$ in
case $l=3$ and $l=4$. We also describe the maximum simplices of the triangulation in terms of nested collections of segments.
\medskip

The fan $\Sigma$ corresponds to a toric variety which is denoted by $\widetilde{Z}_{toric}$.
We can see that subdivision $\Sigma$ consists of $2^{l-1}$ $l-$dimensional simplicial cones. Indeed, when constructing maximum simplices, there are two choices of elements of $P$ for each $\lambda=2,3,\ldots, l$.

\begin{proposition}
The variety $\widetilde{Z}_{toric}$ that corresponds to $\Sigma$ is smooth.
\end{proposition}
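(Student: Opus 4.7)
The plan is to use the standard criterion that a toric variety is smooth if and only if every maximal cone in its fan is generated by part of a $\mathbb{Z}$-basis of the lattice. Since $\Sigma$ is simplicial with every maximal cone having exactly $l$ ray generators, it suffices to show that for every maximal cone $\tau \in \Sigma$, the corresponding $l$ generators form a $\mathbb{Z}$-basis of the lattice $N=\{(a_1,\dots,a_l)\in\mathbb Z^l:\sum a_i\equiv 0\ (\mathrm{mod}\ 2)\}$.

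First, I would use the description already given of maximal cones in terms of nested chains $\Gamma = \{[1,l]=[s_1,t_1]\supsetneq[s_2,t_2]\supsetneq\dots\supsetneq[s_l,t_l]\}$ with $s_l=t_l=:i^*$, where at each step either the left endpoint is increased by one ($L$-move) or the right endpoint decreased by one ($R$-move). The associated ray generators are $v_\lambda=\epsilon_{s_\lambda}+\epsilon_{t_\lambda}$ for $\lambda=1,\dots,l$, where $\epsilon_i$ denotes the $i$-th standard basis vector of $\mathbb Z^l$ (so $e_i=2\epsilon_i$ and $v_l=2\epsilon_{i^*}$).

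Next, I would perform the unimodular change of generating set $v_\lambda\mapsto w_\lambda:=v_\lambda-v_{\lambda+1}$ for $\lambda=1,\dots,l-1$, keeping $w_l:=v_l$. A direct computation gives
\[
w_\lambda=\begin{cases}\epsilon_{s_\lambda}-\epsilon_{s_\lambda+1},&\text{if step $\lambda+1$ is an $L$-move},\\ \epsilon_{t_\lambda}-\epsilon_{t_\lambda-1},&\text{if step $\lambda+1$ is an $R$-move},\end{cases}
\]
so each $w_\lambda$ equals $\pm(\epsilon_j-\epsilon_{j+1})$ for a well-defined $j\in\{1,\dots,l-1\}$. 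Since the successive left endpoints form the increasing sequence $1,2,\dots,i^*$ (one per $L$-move) and the successive right endpoints form the decreasing sequence $l,l-1,\dots,i^*$ (one per $R$-move), the $L$-moves contribute the vectors $\pm(\epsilon_j-\epsilon_{j+1})$ for $j=1,\dots,i^*-1$ and the $R$-moves contribute them for $j=i^*,\dots,l-1$; together they give each value of $j\in\{1,\dots,l-1\}$ exactly once.

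Finally, I would check the elementary fact that $\{\epsilon_j-\epsilon_{j+1}:1\le j\le l-1\}\cup\{2\epsilon_{i^*}\}$ is a $\mathbb{Z}$-basis of $N$: any $(a_1,\dots,a_l)\in N$ is expressed as $\sum_{j<i^*}(a_1+\dots+a_j)(\epsilon_j-\epsilon_{j+1})-\sum_{j\ge i^*}(a_{j+1}+\dots+a_l)(\epsilon_j-\epsilon_{j+1})+\tfrac{1}{2}(\sum_i a_i)\,2\epsilon_{i^*}$, and the last coefficient is integral precisely by the parity condition defining $N$. Combining these steps shows that the original generators $v_1,\dots,v_l$ form a $\mathbb{Z}$-basis of $N$, proving smoothness. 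The only mild obstacle is the bookkeeping in the third step identifying which $\pm(\epsilon_j-\epsilon_{j+1})$ appears, but the endpoint-tracking argument above makes it transparent.
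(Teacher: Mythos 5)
Your proof is correct and takes the same approach as the paper, namely verifying that each maximal cone has multiplicity one in $N$ (equivalently, that its generators form a $\mathbb{Z}$-basis); the paper simply asserts this, whereas you supply the explicit unimodular change of generators and the expansion of an arbitrary lattice vector that justify it.
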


\begin{proof}
The multiplicity of each cone is $1$ in the lattice $N$. So $\widetilde{Z}_{toric}$ is smooth.
Thus $\Sigma$ gives a resolution of singularities of $\widehat{Z}_{toric}$, $\mu_{toric}: \widetilde{Z}_{toric}\to \widehat{Z}_{toric}$.
\end{proof}

In Section \ref{mo-dis-cli}, we will introduce a divisor which corresponds to a certain piecewise linear function on $\Sigma$. We describe this function below. While it is not logically necessary for our main theorem, we will show  that this function is (weakly) convex.

\begin{proposition}
Let $\varphi$ be the $\Sigma-$piecewise linear function $\varphi: \sigma\rightarrow \mathbb{R}$ which takes values
\begin{equation}\label{vertex}
  \varphi(e_{[s,t]})=\begin{cases}
    (2s-1)^2-1, & \text{if $s=t$ };\\
    \frac{(2s-1)^2+(2t-1)^2}{2}+3(t-s)-1 , & \text{if $s < t$}.
  \end{cases}
\end{equation}
Then $\varphi$ is locally convex, in the sense that $\varphi(\alpha x + \beta y)\geq \alpha \varphi(x)+ \beta \varphi(y)$ for $\alpha, \beta \geq 0$ and $x,y \in\sigma$.
\end{proposition}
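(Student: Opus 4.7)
My plan is to reduce local convexity of $\varphi$ to verifying a single concavity inequality across each wall (codimension-one face) of $\Sigma$, and then check these inequalities case-by-case using the combinatorial description of the walls.

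First I would enumerate the walls of $\Sigma$. Each maximal cone corresponds to a nested chain $[1,l] = [s_1,t_1] \supset \cdots \supset [s_l,t_l]$ with $t_\lambda - s_\lambda = l - \lambda$, and an adjacent maximal cone is obtained by replacing a single $[s_\lambda, t_\lambda]$ (for some $\lambda \ge 2$) by the unique other segment of the same length nested between $[s_{\lambda-1}, t_{\lambda-1}]$ and $[s_{\lambda+1}, t_{\lambda+1}]$. A short case analysis reveals two types of walls. Type I, at positions $2 \le \lambda \le l-1$: $[s_{\lambda-1}, t_{\lambda-1}] = [a,b]$ with $b - a \ge 2$ and $[s_{\lambda+1}, t_{\lambda+1}] = [a+1, b-1]$, so the two choices for $[s_\lambda, t_\lambda]$ are $[a, b-1]$ and $[a+1, b]$. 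Type II, at $\lambda = l$: $[s_{l-1}, t_{l-1}] = [a, a+1]$, and $[s_l, t_l]$ is either $[a,a]$ or $[a+1, a+1]$.

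Since $e_{[s,t]} = \tfrac{1}{2}(e_s + e_t)$, the wall relations read
\begin{equation*}
e_{[a, b-1]} + e_{[a+1, b]} = e_{[a, b]} + e_{[a+1, b-1]} \quad (\text{Type I}),
\end{equation*}
\begin{equation*}
e_{[a, a]} + e_{[a+1, a+1]} = 2\,e_{[a, a+1]} \quad (\text{Type II}),
\end{equation*}
so local convexity is equivalent to the inequalities $\varphi(e_{[a,b-1]}) + \varphi(e_{[a+1,b]}) \le \varphi(e_{[a,b]}) + \varphi(e_{[a+1,b-1]})$ and $\varphi(e_{[a,a]}) + \varphi(e_{[a+1,a+1]}) \le 2\varphi(e_{[a,a+1]})$. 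Writing $f(s) = (2s-1)^2$ so that \eqref{vertex} becomes $\varphi(e_{[s,t]}) = \tfrac{1}{2}(f(s) + f(t)) + 3(t - s) - 1$ uniformly for $s \le t$, a direct substitution shows that the Type I inequality is an \emph{equality} (both sides equal $\tfrac{1}{2}(f(a) + f(a+1) + f(b-1) + f(b)) + 6(b - a - 1) - 2$), so $\varphi$ is actually linear across Type I walls, while the Type II gap $2\varphi(e_{[a,a+1]}) - \varphi(e_{[a,a]}) - \varphi(e_{[a+1,a+1]})$ simplifies to $6$, giving strict concavity across Type II walls.

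The main obstacle, such as it is, is the combinatorial bookkeeping to confirm that Type I and Type II together exhaust all walls; this follows directly from the rule $[s_{\lambda+1},t_{\lambda+1}] \in \{[s_\lambda, t_\lambda - 1], [s_\lambda + 1, t_\lambda]\}$ by inspecting when two distinct choices for $[s_\lambda, t_\lambda]$ both lead to the same neighbors $[s_{\lambda-1}, t_{\lambda-1}]$ and $[s_{\lambda+1}, t_{\lambda+1}]$. Once the classification is in hand, the rest is the two-line algebraic verification above, and the proposition follows by the standard wall-test for (upper) convexity of piecewise linear functions on simplicial fans.
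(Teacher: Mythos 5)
Your proposal is correct and follows essentially the same route as the paper's proof: reduce local convexity to the wall test across pairs of adjacent maximal cones, classify the walls into the same two types (interior exchange $[a,b-1]\leftrightarrow[a+1,b]$ and terminal exchange $[a,a]\leftrightarrow[a+1,a+1]$), and verify equality across the first type and a strictly positive gap across the second. Your computation of the Type II gap (equal to $6$) is consistent with the paper's conclusion, so nothing further is needed.
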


\begin{proof}
For a maximum cone $\tau \in \Sigma$, we will introduce the notation $\varphi_{\tau}$ for the linear function on $\mathbb{R}^l$ that is equal to $\varphi$ on $\tau$. It is enough to prove that $\varphi$ is locally convex when restricted to a generic \footnote{ Generic means the segment is disjoint from codimension 2 cones of $\Sigma$.} segment in $\sigma$. So it is sufficient to prove
that for any pair of adjacent $l-$ dimensional cones $\tau$ and $\tau'$ \footnote{Simplices $\tau$ and $\tau'$ are called adjacent iff they have a common codimension one face.}, we have $$\varphi_{\tau}(x) \leq \varphi_{\tau'}(x)$$ for all $x\in \tau$. Moreover, it is enough to prove $\varphi_{\tau}(x) \leq \varphi_{\tau'}(x)$ for the generator x of $\tau$ which is not in $\tau'$.

\smallskip
If two simplices $\tau$ and $\tau'$ are adjacent, the corresponding collections of nested segments $\{[s_\lambda,t_\lambda]|\lambda=1,2,\dots,l\}$ and $\{[s'_\lambda,t'_\lambda]|\lambda=1,2,\dots,l\}$ fall into the following two cases:

Case 1. For some $\nu\in \{2,\dots, l-1\}$, we have $[s_\lambda,t_\lambda]=[s'_\lambda,t'_\lambda]$ when $\lambda\ne \nu$. After a possible switch of $\tau$ and $\tau'$, we denote
 \[\begin{split}
 &[s_{\nu-1},t_{\nu-1}]=[s,t]=[s'_{\nu-1},t'_{\nu-1}] \text{ for some } s,t \in \{1,2,\ldots,l\},\\
 &[s_\nu,t_\nu]=[s,t-1]\neq [s+1,t]=[s'_\nu,t'_\nu],\\
 &[s_{\nu+1},t_{\nu+1}]=[s+1,t-1]=[s'_{\nu+1},t'_{\nu+1}].
 \end{split}\]

Case 2. We have $[s_\lambda,t_\lambda]=[s'_\lambda,t'_\lambda]$ when $\lambda\ne l$. After a possible switch of $\tau$ and $\tau'$, we denote
\[\begin{split}
&[s_{l-1},t_{l-1}]=[s,s+1]=[s'_{l-1},t'_{l-1}],\\
&[s_l,t_l]=[s,s]\neq [s+1,s+1]=[s'_l,t'_l].
\end{split}\]

\smallskip
We first consider Case 1.
We know that $e_{[s,t-1]}$ is the only generator of $\tau$ that is not in $\tau'$. Since $e_{[s,t]}+e_{[s+1,t-1]}=e_{[s,t-1]}+e_{[s+1,t]}$, we have
\[\begin{split}
&\varphi_{\tau'}(e_{[s,t-1]})=\varphi_{\tau'}(e_{[s,t]}+e_{[s+1,t-1]}-e_{[s+1,t]})\\
&=\varphi_{\tau'}(e_{[s,t]})+\varphi_{\tau'}(e_{[s+1,t-1]})-\varphi_{\tau'}(e_{[s+1,t]})\\
&=(\frac{(2s-1)^2+(2t-1)^2}{2}+3(t-s)-1)\\
&+(\frac{(2(s+1)-1)^2+(2(t-1)-1)^2}{2}+3(t-s-2)-1)\\
&-(\frac{(2(s+1)-1)^2+(2t-1)^2}{2}+3(t-s-1)-1)\\
&=\frac{(2s-1)^2+(2(t-1)-1)^2}{2}+3(t-1-s)-1=\varphi_{\tau}(e_{[s,t-1]}).
\end{split}\]
So in this case, we have $\varphi_{\tau}=\varphi_{\tau'}$.

\medskip

Then, we consider Case 2.
We know that $e_s$ is the only generator of $\tau$ that is not in $\tau'$. Since $2e_{[s,s+1]}=e_s+e_{s+1}$, we obtain
\[\begin{split}
&\varphi_{\tau'}(e_s)=\varphi_{\tau'}(2e_{[s,s+1]}-e_{s+1})\\
&=2\varphi_{\tau'}(e_{[s,s+1]})-\varphi_{\tau'}(e_{s+1})\\
&=2(\frac{(2s-1)^2+(2(s+1)-1)^2+3-1}{2})-((2(s+1)-1)^2-1)\\
&=(2s-1)^2+3>(2s-1)^2-1=\varphi_{\tau}(e_s).
\end{split}\]
So in this case, we have $\varphi_{\tau}(x) \leq \varphi_{\tau'}(x)$ for the generator $x$ of $\tau$ which is not in $\tau'$.

Thus we finish the proof.
\end{proof}

\begin{remark}
In fact, Case $1$ of the above argument shows that the $\varphi_\tau$ depends only on the $0-$length segment in $\tau$. As a consequence, for two maximum cones $\tau_1,\tau_2 \in \Sigma$, we have $\varphi_{\tau_1}=\varphi_{\tau_2}$ if and only if $\tau_1$ and $\tau_2$ contain the same vertices of $\sigma$.
\end{remark}

\medskip

We will now describe how one can find recursively a certain quantity involved in the calculation of Clifford-stringy Euler characteristics which we use in Section \ref{contribu}.
\begin{proposition}\label{GrecPhi}
Let $T$ be a subset of $\{1,\ldots,n\}$. Define $G_\varphi[T]$ to be the
sum over the maximum cones $C$ of the fan induced by $\Sigma$ on the span of $\{e_i,i\in T\}$
$$
G_\varphi[T]=
\sum_{C} \prod_{e_{[s,t]}\in C} \frac 1{1+\varphi(e_{[s,t]}) }.
$$
Then $G_\phi[T]$ satisfies the following recursive relations.
\[
\begin{split}
&G_\varphi[\{s\}]=\frac {1}{1+\varphi(e_{[s,s]})} =\frac{1}{(2s-1)^2}, \text { and }\\
&G_\varphi[T]=\frac{1}{1+\varphi(e_{[\min(T),\max(T)]})}
(G_\varphi[T-\max(T)]+G_\varphi[T-\min(T)]).
\end{split}\]
\end{proposition}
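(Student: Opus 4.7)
The plan is to read the recursion directly off the combinatorial structure of the fan $\Sigma$ restricted to $\mathrm{Span}\{e_i : i \in T\}$, which by the description of maximum cones as nested collections of segments matches exactly the two-way branching in the claimed recursion.

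First I would handle the base case $T = \{s\}$: the only maximum cone of the fan on $\mathrm{Span}\{e_s\}$ is the ray through $e_{[s,s]} = e_s$, and $1 + \varphi(e_{[s,s]}) = (2s-1)^2$ by \eqref{vertex}, giving $G_\varphi[\{s\}] = 1/(2s-1)^2$ directly.

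For the recursive step, let $T = \{i_1 < i_2 < \cdots < i_l\}$ with $l \ge 2$, so $\min(T) = i_1$ and $\max(T) = i_l$. By the characterization of maximum simplices of $\widetilde S(T)$ as nested collections $\{[s_\lambda, t_\lambda]\}_{\lambda=1}^l$ of segments with endpoints in $T$ such that consecutive segments differ by shrinking from exactly one side, every such collection must begin with $[s_1, t_1] = [i_1, i_l] = [\min(T), \max(T)]$, and the second segment $[s_2, t_2]$ is either $[i_1, i_{l-1}]$ or $[i_2, i_l]$. In the first case the remaining collection $\{[s_\lambda, t_\lambda]\}_{\lambda=2}^l$ is a nested collection on $T \setminus \{\max(T)\}$ of the form required by a maximum cone of the fan on $\mathrm{Span}\{e_i : i \in T \setminus \max(T)\}$, and symmetrically in the second case for $T \setminus \{\min(T)\}$. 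This gives a bijection
\[
\{\text{max cones of fan on }T\} \;\longleftrightarrow\; \{\text{max cones on }T\setminus\max(T)\} \,\sqcup\, \{\text{max cones on }T\setminus\min(T)\},
\]
where a cone $C$ on the left corresponds to $(e_{[\min(T),\max(T)]}) \cup C'$ for the cone $C'$ on the right.

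Substituting this bijection into the definition of $G_\varphi[T]$, the ray $e_{[\min(T),\max(T)]}$ appears in the product for every $C$, so its factor $\bigl(1+\varphi(e_{[\min(T),\max(T)]})\bigr)^{-1}$ pulls out, and the remaining product over the rays of $C'$ splits the sum into two pieces that are by definition $G_\varphi[T-\max(T)]$ and $G_\varphi[T-\min(T)]$:
\[
G_\varphi[T] = \frac{1}{1+\varphi(e_{[\min(T),\max(T)]})} \bigl(G_\varphi[T-\max(T)] + G_\varphi[T-\min(T)]\bigr).
\]
There is no real obstacle: the only point that requires a sentence of justification is that the fan induced by $\Sigma$ on $\mathrm{Span}\{e_i : i \in T\}$ coincides with the fan produced by the analogous triangulation construction applied to $T$ in place of $\{1,\dots,l\}$, which is immediate from the inductive construction of $\widetilde S(1,\dots,l)$ since at each stage the edge joining $e_s$ and $e_t$ with $s,t \in T$ gets subdivided at $e_{[s,t]}$ regardless of which indices outside $T$ are present.
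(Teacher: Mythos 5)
Your proposal is correct and follows essentially the same route as the paper's own (much terser) proof: identify the maximal cones of the induced fan with nested collections of segments having endpoints in $T$, note that each such collection contains $[\min(T),\max(T)]$, and split according to whether the rest lives on $T-\max(T)$ or $T-\min(T)$, pulling out the common factor $\bigl(1+\varphi(e_{[\min(T),\max(T)]})\bigr)^{-1}$. Your extra remark identifying the induced fan with the triangulation construction applied to $T$ is a useful elaboration of a point the paper leaves implicit.
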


\begin{proof}
The $| T|=1$ case is clear. 
For $|T|>1$ the maximum cones of $\Sigma$  restricted to the span of $\{e_{i},i\in T\}$  correspond  to nested segments with ends in $T$. Thus, they
all contain 
$$
e_{[\min(T),\max(T)]}.
$$
Moreover, these cones are either spanned by the above vertex and the cones for $T-\min T$ or by
the above vertex and the cones for $T-\max T$.
\end{proof}


\begin{thebibliography}{999999999}





\bibitem{AKMW} D. Abramovich, K. Karu, K. Matsuki, J. W\l odarsczyk,
{\em Torification and Factorization of Birational Maps},
J. Amer. Math. Soc. (15)(2002), no. 3, 531-572.

\bibitem{Add} N. Addington, \emph{Spinor Sheaves and Complete intersection of Quadrics},
dissertation (2009), 13.

\bibitem{Batyrev.2}
V. Batyrev, \emph{Dual polyhedra and mirror symmetry for Calabi-Yau hypersurfaces in toric varieties}, J. Algebraic Geom. 3 (3) (1994) 493-535.

\bibitem{Batyrev.1}
V. Batyrev, \emph{Stringy Hodge numbers of varieties with Gorenstein canonical singularities},
Integrable systems and algebraic geometry (Kobe/Kyoto, 1997), 1-32,
World Sci. Publishing, River Edge, NJ, 1998.

\bibitem{Bat3}
V. Batyrev, \emph{Non-Archimedean integrals and stringy Euler numbers of log-terminal pairs},
J. Eur. Math. Soc. (JEMS) 1 (1999), no. 1, 5-33.

\bibitem{Baty-Bori}
V. Batyrev, L. Borisov, \emph{Mirror duality and string-theoretic Hodge numbers}, Invent. Math. 126 (1) (1996) 183-203.

\bibitem{Bertram}
A. Bertram, \emph{An application of a log version of the Kodaira vanishing theorem to embedded projective varieties},
preprint alg-geom/9707001.


\bibitem{LBo}
L. Borisov, \emph{Towards the mirror symmetry for Calabi-Yau complete intersections in Gorenstein toric Fano varieties}, arXiv:math.AG/9310001.


\bibitem{BorK0}
L. Borisov \emph{The class of the affine line is a zero divisor in the Grothendieck ring},
J. Algebraic Geom. 27 (2018), no. 2, 203-209. 


\bibitem{BorCald}
L. Borisov, A. C\v{a}ld\v{a}raru,
\emph{The Pfaffian-Grassmannian derived equivalence}, J. Algebraic Geom. 18 (2009), no. 2, 201-222.

\bibitem{LBoZL}
L. Borisov, Z. Li, \emph{On Clifford double mirrors of toric complete intersections}, Adv. Math. 328 (2018) 300-355.


\bibitem{Dcox}
D. Cox, S. Katz, \emph{Mirror Symmetry and Algebraic Geometry}, Mathematical Surveys and Monographs, vol.68, American Mathematical Society, Providence, RI, 1999.

\bibitem{Clarke}
P. Clarke, \emph{Birationality and Landau-Ginzburg Models}, Comm. Math. Phys. 353 (2017), no. 3, 1241-1260.

\bibitem{CKM}
H. Clemens, J. Koll\'ar, S. Mori, \emph{Higher-dimensional complex geometry. }
Ast\'erisque No. 166 (1988), 144 pp.


\bibitem{DKh}
V. Danilov, A. Khovanskii, \emph{Newton polyhedra and an algorithm for computing Hodge-Deligne
numbers.}  Bull. AMS, Vol. 30 (1994), no 1 , 62-69.

\bibitem{FaveroKelly}
D. Favero, T. Kelly, \emph{Proof of a conjecture of Batyrev and Nill}, Amer. J. Math. 139 (2017), no. 6, 1493-1520.



\bibitem{Kempf}
G. Kempf, F. Knudsen, D. Mumford, and B. Saint-Donat,
\emph{Toroidal embeddings I.}
Lecture Notes in Mathematics, vol. 339, Springer-Verlag, Berlin, Heidelberg, New York, 1973.

\bibitem{Mkon}
M. Kontsevich, \emph{Homological algebra of mirror symmetry} in: Proceedings of the International Congress of Mathematicians, vols. 1, 2, Z$\ddot{u}$rich, 1994, 1994, pp. 120-139.

\bibitem{ku1}
A. Kuznetsov, \emph{Derived categories of quadric fibrations and intersections of quadrics}, Adv. Math. 218 (2008), no. 5, 1340-1369.

\bibitem{ku2}
A. Kuznetsov, \emph{Semiorthogonal decompositions in algebraic geometry}, Proceedings of the International Congress of Mathematicians, Seoul 2014. Vol. II, 635-660.

\bibitem{HPD}
A. Kuznetsov, \emph{Homological projective duality}, Publ. Math. Inst. Hautes Études Sci. No. 105 (2007), 157-220. 

\bibitem{Kuz1}
A. Kuznetsov, \emph{Homological projective duality for Grassmannians of lines}, preprint math.AG/0610957.

\bibitem{Rodland}
E. R{\o}dland, \emph{The Pfaffian Calabi-Yau, its mirror, and their link to the Grassmannian G(2,7)}, Compositio Math. 122 (2000), no. 2, 135-149.

\bibitem{Thaddeus}
M. Thaddeus, \emph{Complete collineations revisited.}
Math. Ann. 315 (1999), no. 3, 469-495.

\bibitem{ZhanLi}
Z. Li, \emph{On the birationality of complete intersections associated to nef-partitions}, Adv. Math. 299 (2016), 71-107.


\end{thebibliography}
\end{document}